\newtheorem{thm}{Theorem}[section]
\newtheorem{lem}[thm]{Lemma}
\theoremstyle{definition}
\theoremstyle{remark}
\newtheorem{rem}[thm]{Remark}
\newtheorem*{ex}{Example}
\numberwithin{equation}{section}
\newcommand{\BibTeX}{B\kern-0.1emi\kern-0.017emb\kern-0.15em\TeX}
\newcommand{\XYpic}{$\mathrm{X\kern-0.3em\raisebox{-0.18em}{Y}}$-$\mathrm{pic}\,$}
\newcommand{\cl}{C \kern -0.1em \ell}  
\newcommand{\BF}{\mathbb{F}}
\newcommand{\BR}{\mathbb{R}}
\newcommand{\BC}{\mathbb{C}}
\newcommand{\ed}{\end{document}}
\def\cl{{C}\!\ell}
\def\F{{\mathbb F}}
\def\BC{{\mathbb C}}
\def\C{\mathcal {G}}
\def\P{{\rm P}}
\def\A{{\rm A}}
\def\B{{\rm B}}
\def\Q{{\rm Q}}
\def\Z{{\rm Z}}
\def\H{{\rm H}}
\def\Z{{\rm Z}}
\def\Aut{{\rm Aut}}
\def\ker{{\rm ker}}
\def\Pin{{\rm Pin}}
\def\Spin{{\rm Spin}}
\def\ad{{\rm ad}}
\def\mod{{\rm \;mod\; }}
\def\Lambd{{\rm\Lambda}}
\def\Gamm{{\rm\Gamma}}
\def\Mat{{\rm Mat}}
\def\GL{{\rm GL}}
\def\UT{{\rm UT}}
\def\SUT{{\rm SUT}}
\def\Heis{{\rm Heis}}
\def\tr{{\rm tr}}
\begin{document}

%
%
%
%
%
%
%
%
%

\title[On Lie Groups Preserving Subspaces of Degenerate Clifford Algebras]
 {On Lie Groups Preserving Subspaces of Degenerate Clifford Algebras}
\author[E. Filimoshina]{Ekaterina Filimoshina}
\address{%
HSE University\\
Moscow 101000\\
Russia
\medskip}
\email{filimoshinaek@gmail.com}
%
\author[D. Shirokov]{Dmitry Shirokov}
\address{%
HSE University\\
Moscow 101000\\
Russia
\medskip}
\address{
and
\medskip}
\address{
Institute for Information Transmission Problems of the Russian Academy of Sciences \\
Moscow 127051 \\
Russia}
\email{dm.shirokov@gmail.com}
\subjclass{15A66, 11E88}
\keywords{geometric algebra, Clifford algebra, degenerate geometric algebra, Lipschitz group, Clifford group, spin group, grade involution, reversion, adjoint representation, twisted adjoint representation, Heisenberg group}
%
\begin{abstract}
This paper introduces Lie groups in degenerate geometric (Clifford) algebras that preserve four fundamental subspaces determined by the grade involution and reversion under the adjoint and twisted adjoint representations. We prove that these Lie groups can be equivalently defined using norm functions of multivectors applied in the theory of spin groups. We also study the corresponding Lie algebras. Some of these Lie groups and algebras are closely related to Heisenberg Lie groups and algebras. The introduced groups are interesting for various applications in physics and computer science, in particular, for constructing equivariant neural networks. 
\end{abstract}
\label{page:firstblob}
\maketitle

\section{Introduction}

In this paper, we consider degenerate and non-degenerate real and complex geometric (Clifford) algebras $\C_{p,q,r}$  of arbitrary dimension and signature (in the case of complex geometric algebras, we can take $q=0$). Degenerate geometric algebras have  applications in physics \cite{br1,br2}, geometry \cite{pga1,gunn1,ma}, computer vision and image processing \cite{bayro_new1,dorst_new,hild_new}, motion capture and robotics \cite{bayro_new1,ro2_new,sommer_new}, neural networks and machine learning \cite{tf,cNN0,cNN}, etc.

One of the most significant concepts in the theory of spin groups \cite{lg1,lounesto,p} is the twisted adjoint representation $\check{\ad}$. 
It is used to describe two-sheeted coverings of orthogonal groups by spin groups.
The representation $\check{\ad}$ was first introduced by Atiyah, Bott, and Shapiro in \cite{ABS} for a particular case needed in the context of spin groups.
There are two main approaches to extending this definition to the general case (see Section \ref{section_au}). 
In our paper, we give the answers to all the posed questions for both of them and denote the two versions of the twisted adjoint representation by $\check{\ad}$ and $\tilde{\ad}$ respectively. 
We consider the adjoint representation as well and denote it by $\ad$.
The well-known (see, for example, \cite{ABS,lg1,lounesto}) Clifford groups $\Gamma_{p,q,r}$ (\ref{def_cg}) and Lipschitz groups $\Gamma^{\pm}_{p,q,r}$ (\ref{def_lg}) preserve the grade-$1$ subspace  $\C^{1}_{p,q,r}$ (of vectors) under $\ad$ and $\check{\ad}$ respectively. 
The spin groups can be considered as normalized subgroups of the Lipschitz groups (see \cite{lg1,lounesto,p} and examples in the case $\C_{p,q,0}$ in Section \ref{section_centralizers}). 
They are defined using the norm functions $\psi(T):=\widetilde{T}T$ and $\chi(T):=\widehat{\widetilde{T}}T$ of the Lipschitz groups' elements $T\in\Gamma^{\pm}_{p,q,r}$.

In this work, we consider the analogues of the Clifford groups $\Gamma_{p,q,r}$ and Lipschitz groups $\Gamma^{\pm}_{p,q,r}$, which we call generalized Clifford and Lipschitz groups. 
These groups preserve the subspaces $\C^{\overline{k}}_{p,q,r}$, $k=0,1,2,3$, determined by the grade involution and reversion under $\ad$, $\check{\ad}$, and $\tilde{\ad}$.
By analogy with the notation of the Clifford and Lipschitz groups, we denote these groups by $\Gamma^{\overline{k}}_{p,q,r}$, $\check{\Gamma}^{\overline{k}}_{p,q,r}$, and $\tilde{\Gamma}^{\overline{k}}_{p,q,r}$ respectively. The mark above the letter indicates under which representation the subspace is preserved.
We introduce and study the families of Lie groups $\Q^{\overline{k}}_{p,q,r}$ and $\check{\Q}^{\overline{k}}_{p,q,r}$, $k=0,1,2,3$, that are defined by conditions imposed on the values of the norm functions  $\psi$ and $\chi$ of groups' elements. 
We prove that the groups $\Gamma^{\overline{k}}_{p,q,r}$, $\check{\Gamma}^{\overline{k}}_{p,q,r}$, and $\tilde{\Gamma}^{\overline{k}}_{p,q,r}$ coincide in different cases with the groups $\Q^{\overline{k}}_{p,q,r}$ and $\check{\Q}^{\overline{k}}_{p,q,r}$ (see Theorems \ref{maintheo_q} and \ref{maintheo_checkq}).
In these theorems and definitions of $\Q^{\overline{k}}_{p,q,r}$ and $\check{\Q}^{\overline{k}}_{p,q,r}$, we use centralizers and twisted centralizers of the subspaces of fixed grades $\C^{m}_{p,q,r}$  and the subspaces $\C^{\overline{k}}_{p,q,r}$ in $\C_{p,q,r}$ \cite{centr}. 
Theorems  \ref{maintheo_q}, \ref{maintheo_checkq}, \ref{thm_alg} and Lemmas  \ref{lemma_QABP}, \ref{lemma_chQ_eq} are new.

The current work generalizes the results of the papers \cite{OnInner} and \cite{GenSpin}, which focus on the non-degenerate geometric algebras $\C_{p,q,0}$ and consider groups preserving $\C^{\overline{k}}_{p,q,0}$ under $\ad$ and $\check{\ad}$ respectively.
In the previous works \cite{ICACGA,OnSomeLie}, we consider the groups preserving the even and odd subspaces under $\ad$, $\check{\ad}$, and $\tilde{\ad}$ in the case of any degenerate and non-degenerate $\C_{p,q,r}$ (see (\ref{prev_gr_0})--(\ref{p_f}) and the notes above). 
The papers \cite{AB_cgi,AB_aaca} consider the groups preserving other subspaces under $\ad$, $\check{\ad}$, and $\tilde{\ad}$ in the case of any $\C_{p,q,r}$.
In this work, we show the connection between these groups and the groups $\Gamma^{\overline{k}}_{p,q,r}$, $\check{\Gamma}^{\overline{k}}_{p,q,r}$, and $\tilde{\Gamma}^{\overline{k}}_{p,q,r}$. The current paper completes the study on the classification of Lie groups preserving fixed subspaces of geometric algebras under the adjoint and twisted adjoint representations, begun in the previous works.

In the case of small dimensions, the generalized degenerate Clifford and Lipschitz groups introduced in this paper are related to the well-known matrix groups (see Section~\ref{section_examples} for examples). 
In the special case of the Grassmann algebras $\C_{0,0,1}$ and $\C_{0,0,2}$, all the  considered groups  can be realized as subgroups of the upper triangular matrix groups $\UT(2,\F)$ and $\UT(4,\F)$. 
In the case of $\C_{0,0,2}$, the groups are closely related to the higher-dimensional Heisenberg group $\Heis_4$, which is applied in quantum mechanics and computing (see, for example, \cite{baker,heis2}).
Namely, the group $\Q^{\overline{0}}_{0,0,2}=\Q^{\overline{2}}_{0,0,2}=\check{\Q}^{\overline{1}}_{0,0,2}=\check{\Q}^{\overline{2}}_{0,0,2}$ is isomorphic to a matrix group, and its unipotent subgroup is a subgroup of $\Heis_4$.
We provide examples of the connections between the Lie algebras corresponding to the considered Lie groups and classical matrix and other Lie algebras as well.
For instance, in the case of $\C_{1,0,2}$, $\C_{1,1,1}$, and $\C_{3,0,0}$, some of the Lie algebras are isomorphic to the direct sums of $\mathfrak{su}(2)$, $\mathfrak{sl}(2,\BC)$, Heisenberg Lie algebras $\mathfrak{heis}(3,\F)$ and  $\mathfrak{heis}(4,\F)$, and Poincaré Lie algebra $\mathfrak{p}(1,1)$ (see examples in Section \ref{section_liealg}).

The generalized degenerate Clifford and Lipschitz groups can be useful for applications, for example, in neural networks, computer vision, and quantum mechanics.
The ordinary degenerate  Lipschitz groups  and Clifford groups \cite{Abl,br1,br2,cr,tu} have applications in construction of Clifford group equivariant neural networks \cite{cNN} and geometric Clifford algebra networks \cite{cNN0}, representation theory of Galilei group  in quantum mechanics \cite{br2}, rotor-based edge detection \cite{ce}, etc.
The generalized Lipschitz groups can be applied in construction of generalized Lipschitz group equivariant neural networks \cite{glgenn0,glgenn}.

This paper is organized as follows. Section \ref{section_Cpqr} introduces all the necessary notation related to the degenerate and non-degenerate geometric algebras $\C_{p,q,r}$.
Section \ref{section_au} recalls the notion of inner and twisted inner automorphisms and adjoint and twisted adjoint representations in $\C_{p,q,r}$.
In Section \ref{section_centralizers}, we recall the notion of centralizers and twisted centralizers and connect them with the groups considered in the next sections. 
Section \ref{section_relworks} discusses related works on groups preserving fixed subspaces of geometric algebras under the adjoint and twisted adjoint representations. 
In Section \ref{section_qt}, we consider the groups $\Gamma^{\overline{k}}_{p,q,r}$, $\check{\Gamma}^{\overline{k}}_{p,q,r}$, and $\tilde{\Gamma}^{\overline{k}}_{p,q,r}$, $k=0,1,2,3$, preserving the subspaces determined by the grade involution and reversion under $\ad$, $\check{\ad}$, and $\tilde{\ad}$ respectively and prove that they coincide with the groups $\Q^{\overline{k}}_{p,q,r}$ and $\check{\Q}^{\overline{k}}_{p,q,r}$.
Section \ref{section_examples} provides several examples of all the groups considered in the work. 
The corresponding Lie algebras are studied in Section~\ref{section_liealg}.
Conclusions follow in Section \ref{section_conclusions}.

\section{Geometric Algebras $\C_{p,q,r}$}\label{section_Cpqr}

Let us consider the geometric (Clifford) algebra \cite{HelmBook,hestenes,lounesto,p} $\C(V)=\C_{p,q,r}$, $p+q+r=n\geq1$, over a vector space $V$ with a symmetric bilinear form, where $V$ can be real $\BR^{p,q,r}$ or complex  $\BC^{p+q,0,r}$. We use $\F$ to denote the field of real numbers $\BR$ in the first case and the field of complex numbers $\BC$ in the second case. In this work, we consider both the case of the non-degenerate geometric algebras $\C_{p,q,0}$ and the case of the degenerate geometric algebras $\C_{p,q,r}$, $r\neq0$.
We use $\Lambda_r$ to denote the subalgebra $\C_{0,0,r}$, which is the Grassmann (exterior) algebra \cite{phys,lounesto}.
We use $\C:=\C_{p,q,r}$, $\C_{p,q}:=\C_{p,q,0}$, and $\Lambda:=\Lambda_r$ for brevity when considering arbitrary $p,q,r$, the non-degenerate geometric algebra case, and the Grassmann algebra case respectively, specifying the indices where needed.
The identity element is denoted by $e$, the generators are denoted by $e_a$, $a=1,\ldots,n$. The generators satisfy the conditions:
\begin{eqnarray}
    e_a e_b + e_b e_a = 2 \eta_{ab}e,\qquad \forall a,b=1,\ldots,n,
\end{eqnarray}
where $\eta=(\eta_{ab})$ is the diagonal matrix with $p$ times $+1$, $q$ times $-1$, and $r$ times $0$ on the diagonal in the real case $\C(\BR^{p,q,r})$ and $p+q$ times $+1$ and $r$ times $0$ on the diagonal in the complex case $\C(\BC^{p+q,0,r})$.

Consider the subspaces $\C^k$ of fixed grades $k=0,\ldots,n$. Their elements are linear combinations of the basis elements $e_A=e_{a_1\ldots a_k}:=e_{a_1}\cdots e_{a_k}$, $a_1<\cdots<a_k$, labeled by ordered multi-indices $A$ of length $k$, where $0\leq k\leq n$. The multi-index with zero length $k=0$ corresponds to the identity element $e$. 
The grade-$0$ subspace is always denoted by $\C^0$ without the lower indices $p,q,r$, since it does not depend on the geometric algebra's signature.
We have $\C^{k}=\{0\}$ for $k<0$ and $k>n$.
We use the following notation:
\begin{eqnarray}\label{def_Cgeqk}
\C^{\geq k}:=\C^{k}\oplus\C^{k+1}\oplus\cdots\oplus\C^{n},\;\; \C^{\leq k}:=\C^0\oplus\C^{1}\oplus\cdots\oplus\C^{k}
\end{eqnarray}
for $0\leq k\leq n$. For example, $\C^{\geq0}=\C^{\leq n}=\C$ and $\C^{\geq n}=\C^{n}$. 

The spaces $\C^k_{p,q}$ and $\Lambda^k_r$, $k=0,...,n$, can be regarded as subspaces of $\C_{p,q,r}$. By $\{\C^k_{p,q}\Lambda^{l}_r\}$, we denote
 the subspace of $\C_{p,q,r}$ spanned by the elements of the form $ab$, where $a\in\C^k_{p,q}$ and $b\in\Lambda^l_r$.

In this work, we use such conjugation operations as grade involution, reversion, and their composition, which is usually called Clifford conjugation. 
The grade involute of an element  $U\in\C$ is denoted by $\widehat{U}$, the reverse is denoted 
by $\widetilde{U}$, the Clifford conjugate is denoted by $\widehat{\widetilde{U}}$.

The grade involution defines the even $\C^{(0)}$ and odd $\C^{(1)}$ subspaces:
\begin{eqnarray}\label{def_even_odd}
\C^{(k)} = \{U\in\C:\quad \widehat{U}=(-1)^kU\}=\bigoplus_{j=k\mod{2}}\C^j,\qquad k=0,1.
\end{eqnarray}
We can represent any element $U\in\C$ as a sum $U=U_{(0)}+U_{(1)}$, where $U_{(0)}\in\C^{(0)}$ and $U_{(1)}\in\C^{(1)}$. We use the operations of projection 
 $\langle\rangle_{(0)}$ and $\langle\rangle_{(1)}$ onto the even $\C^{(0)}$ and odd $\C^{(1)}$ subspaces respectively:
\begin{eqnarray}\label{proj1}
    \langle U \rangle_{(0)} := U_{(0)},\qquad \langle U \rangle_{(1)} := U_{(1)}.
\end{eqnarray}
For an arbitrary subset $\H\subseteq\C$, we have
\begin{eqnarray}
    \langle \H \rangle_{(0)} := \H\cap\C^{(0)},\qquad  \langle \H \rangle_{(1)} := \H\cap\C^{(1)}.\label{H_even}
\end{eqnarray}

The grade involution and reversion define four subspaces $\C^{\overline{0}}$,  $\C^{\overline{1}}$,  $\C^{\overline{2}}$, and  $\C^{\overline{3}}$ (they are called the subspaces of quaternion types $0, 1, 2$, and $3$ respectively in the papers \cite{quat1, quat2, quat3}):
\begin{eqnarray}
\C^{\overline{k}}=\{U\in\C:\;\; \widehat{U}=(-1)^k U,\;\; \widetilde{U}=(-1)^{\frac{k(k-1)}{2}} U\},\;\; k=0, 1, 2, 3.\label{qtdef}
\end{eqnarray}
The geometric algebra $\C$ can be represented as a direct sum $\C = \C^{\overline{0}}\oplus\C^{\overline{1}}\oplus\C^{\overline{2}}\oplus\C^{\overline{3}}$ and viewed as $\mathbb{Z}_2\times\mathbb{Z}_2$-graded algebra with respect to the commutator and anticommutator \cite{b_lect}.
To denote the direct sum of different subspaces, we use the upper multi-index and omit the direct sum sign. For instance, $\C^{(1)\overline{2}4}:=\C^{(1)}\oplus\C^{\overline 2}\oplus\C^{4}$.

\section{Inner and Twisted Inner Automorphisms in Geometric Algebras}\label{section_au}

We use the upper index $\times$ to denote the subset $\H^{\times}$ of all invertible elements of any set $\H$. Consider the well-known (see, for example, \cite{ABS,lg1,lounesto}) Clifford and Lipschitz groups, which are defined in the following way:
\begin{eqnarray}
    \Gamma&:=&\Gamma_{p,q,r}:=\{T\in\C^{\times}:\quad T\C^{1}T^{-1}\subseteq\C^{1}\},\label{def_cg}
    \\
    \Gamma^{\pm}&:=&\Gamma^{\pm}_{p,q,r}:=\{T\in\C^{\times}:\quad \widehat{T}\C^{1}T^{-1}\subseteq\C^{1}\}.\label{def_lg}
\end{eqnarray}

Let us consider the adjoint representation $\ad$ acting on the group of all invertible elements $\ad:\C^{\times}\rightarrow\Aut(\C)$ as $T\mapsto\ad_T$, where $\ad_{T}:\C\rightarrow\C$:
\begin{eqnarray}\label{ar}
\ad_{T}(U):=TU T^{-1},\qquad U\in\C,\qquad T\in\C^{\times}.
\end{eqnarray}
Also let us consider the twisted adjoint representation, which is introduced in a particular case in the paper \cite{ABS}. 
There are two possible general definitions of the twisted adjoint representation in the literature, we use the notation $\check{\ad}$ and $\tilde{\ad}$ for them\footnote{We also want to note that an anonymous reviewer wisely suggested calling $\tilde{\ad}$ a  {\it super adjoint representation} as this is the natural action of the group of units
of a superalgebra on the superalgebra.}.
These operations are acting $\check{\ad},\tilde{\ad}:\C^{\times}\rightarrow\Aut(\C)$ as $T\mapsto\check{\ad}_T$, $T\mapsto\tilde{\ad}_T$ with $\check{\ad}_{T},\tilde{\ad}_{T}:\C\rightarrow\C$:
\begin{eqnarray}
\!\!\!\!\!\!\!\!\!\!\!\!\!\!\!&&\check{\ad}_{T}(U):=\widehat{T}U T^{-1},\quad U\in\C,\quad T\in\C^{\times},\label{twa1}
\\
\!\!\!\!\!\!\!\!\!\!\!\!\!\!\!&&\tilde{\ad}_{T}(U):=T\langle U\rangle_{(0)} T^{-1}+\widehat{T} \langle U\rangle_{(1)} T^{-1},\quad \forall U\in\C,\quad T\in\C^{\times}.\label{twa22}
\end{eqnarray}
For elements of fixed parity, we have 
\begin{eqnarray}
&&\tilde{\ad}_{T}(U_{(0)})=\ad_T(U_{(0)}),\qquad \forall U_{(0)}\in\C^{(0)},\qquad T\in\C^{\times},\label{ad_t_1}\\ 
&&\tilde{\ad}_{T}(U_{(1)})=\check{\ad}_T(U_{(1)}),\qquad \forall U_{(1)}\in\C^{(1)},\qquad T\in\C^{\times}.\label{ad_t_2}
\end{eqnarray}
Answers to all the questions for $\check{\ad}$ and $\tilde{\ad}$ posed in this work  are closely related,  and we consider both these operations for the readers' convenience. 

Kernels of $\ad$, $\check{\ad}$, and $\tilde{\ad}$ have the form (see, for example, \cite{OnSomeLie}):
\begin{eqnarray}
    \!\!\!\!\!\!\!\!\!\!&&\!\!\!\!\!\!\!\!\!\!\!\!\!\!\!\!\!\ker{(\ad)}=\{T\in\C^{\times}:\; T U T^{-1}=U,\; \forall U \in\C\}=\Z^{\times},\label{ker_ad}
    \\
   \!\!\!\!\!\!\!\!\!\!&&\!\!\!\!\!\!\!\!\!\!\!\!\!\!\!\!\!\ker(\check{\ad})=\{T\in\C^{\times}:\; \widehat{T}UT^{-1}=U,\; \forall U\in\C\}=\Lambda^{(0)\times},\label{ker_check_ad}
    \\
    \!\!\!\!\!\!\!\!\!\!&&\!\!\!\!\!\!\!\!\!\!\!\!\!\!\!\!\!\ker(\tilde{\ad})=\{T\in\C^{\times}:\; T \langle U\rangle_{(0)} T^{-1} \!\!+\!\widehat{T}\langle U\rangle_{(1)}T^{-1}\!=\!U,\; \forall U\in\C\}\!=\!\Lambda^{\times}.\label{ker_tilde_ad}
\end{eqnarray}

\section{Preliminaries on Centralizers and Twisted Centralizers}\label{section_centralizers}

In this section, we consider the notion of centralizers and twisted centralizers of the subspaces of fixed grades $\C^m$ and the subspaces $\C^{\overline{k}}$ determined by the grade involution and reversion in $\C$. These centralizers and twisted centralizers are considered in detail in the paper \cite{centr}. In the current work, we use some of the centralizers and twisted centralizers in definitions of the groups in Section \ref{section_qt} and need their explicit forms to prove the main statements.

Let us consider centralizers $\Z^{m}$ and twisted centralizers $\check{\Z}^{m}$ of the subspaces $\C^{m}$ of fixed grades, $m=0,1,\ldots,n$, in $\C$:
\begin{eqnarray}
\Z^{m}&:=&\{X\in\C:\quad X V = V X,\quad \forall V\in\C^{m}\},\label{def_Zm}
\\
    \check{\Z}^m&:=&\{X\in\C:\quad \widehat{X} V = V X,\quad \forall V\in\C^{m}\}.\label{def_chZm}
\end{eqnarray}
The center of $\C$ is also the centralizer; we denote it by $\Z:=\Z^1$. It is well known (see, for example, \cite{Abl,br1}) that
\begin{eqnarray}\label{Zpqr}
\Z=
\left\lbrace
\begin{array}{lll}
\Lambd^{(0)}\oplus\C^n,&&\mbox{$n$ is odd},
\\
\Lambd^{(0)},&&\mbox{$n$ is even}.
\end{array}
\right.
\end{eqnarray}
We have ${\Z}^{m}=\check{{\Z}}^{m}=\C$ for $m<0$ and $m>n$.
Explicit forms of $\Z^m$ and $\check{\Z}^m$, $m=0,\ldots,n$, are provided in \cite{centr}.
For the sets $\Z^m$ and $\check{\Z}^m$, $m=0,\ldots,n$, the projections (\ref{H_even}) onto the even and odd subspaces $\C^{(l)}$, $l=0,1$, have the form 
\begin{eqnarray}
\langle\Z^m\rangle_{(l)}:= \Z^m\cap\C^{(l)}=\{\langle X\rangle_{(l)}\;|\;X\in\Z^m\},
\\
\langle\check{\Z}^m\rangle_{(l)}:= \check{\Z}^m\cap\C^{(l)}=\{\langle X\rangle_{(l)}\;|\;X\in\check{\Z}^m\},
\end{eqnarray}
since $\Z^m$ and $\check{\Z}^m$ are direct sums of fixed subspaces of the special form \cite{centr}.
The projections $\langle\Z^m\rangle_{(0)}$ and $\langle\check{\Z}^m\rangle_{(0)}$ coincide by definition:
\begin{eqnarray}
    \langle\Z^m\rangle_{(0)}=\langle\check{\Z}^m\rangle_{(0)},\qquad \forall m=0,1,\ldots,n.\label{same_even}
\end{eqnarray}

\begin{rem}\label{CC3CC4}
    Further, we use the following facts implied by Theorem 3.6 \cite{centr} about explicit forms of centralizers and twisted centralizers:
\begin{eqnarray}
\!\!\!\!\!\!\!\!\!\!\!\!\!\!&\ker(\ad)=\Z^{1\times}\subseteq\Z^{m\times},\quad \ker(\check{\ad})=\Lambda^{(0)\times}\subseteq\check{\Z}^{m\times},\quad  m=0,\ldots,n;\label{CC3CC4_2}
\\
\!\!\!\!\!\!\!\!&\Z^{m}\subseteq\Z^{4},\qquad \check{\Z}^{m}\subseteq\Z^{4},\qquad m=1,2,3.\label{CC3CC4_3}
\end{eqnarray}
\end{rem}

Let us also consider the centralizers $\Z^{\overline{m}}$ and twisted centralizers $\check{\Z}^{\overline{m}}$ of the subspaces $\C^{\overline{m}}$ (\ref{qtdef}), $m=0,1,2,3$, in $\C$:
\begin{eqnarray}
\!\!\!\!\!\!\!\!\!\!\!\!\!\!\!\Z^{\overline{m}}\!\!\!&:=&\!\!\!\{X\in\C:\quad X V = V X,\quad \forall V\in\C^{\overline{m}}\},\quad m=0,1,2,3,\label{def_CC_ov}
\\
\!\!\!\!\!\!\!\!\!\!\!\!\!\!\!\check{\Z}^{\overline{m}}\!\!\!&:=&\!\!\!\{X\in\C:\quad \widehat{X} V = V X,\quad \forall V\in\C^{\overline{m}}\},\quad m=0,1,2,3.\label{def_chCC_ov}
\end{eqnarray}
These sets are considered in detail in the paper \cite{centr}. Further, we use the following lemma  about their explicit forms.
\begin{lem}[Theorem 5.1 \cite{centr}]\label{centralizers_qt}
We have
\begin{eqnarray*}
&\Z^{\overline{m}}=\Z^{m},\quad\check{\Z}^{\overline{m}}=\check{\Z}^{m},\quad m=1,2,3;
\\
&\Z^{\overline{0}}=\Z^{4},\quad \check{\Z}^{\overline{0}}=\langle\Z^{4}\rangle_{(0)}.
\end{eqnarray*}
\end{lem}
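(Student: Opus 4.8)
The plan is to reduce each quaternion-type (twisted) centralizer to an intersection of ordinary graded (twisted) centralizers and then collapse that intersection using the explicit forms of Theorem~3.6~\cite{centr} recorded in Remark~\ref{CC3CC4}.

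First I would unwind (\ref{qtdef}) on basis elements. Since $\widehat{e_A}=(-1)^{|A|}e_A$ and $\widetilde{e_A}=(-1)^{|A|(|A|-1)/2}e_A$, both sign conditions in (\ref{qtdef}) depend only on $|A|\bmod 4$, and checking the four residues gives $\C^{\overline m}=\bigoplus_{j\equiv m\,(4)}\C^{j}$, $m=0,1,2,3$. As the defining relations $XV=VX$ and $\widehat X V=VX$ are linear in $V$, an element lies in $\Z^{\overline m}$ (resp.\ $\check{\Z}^{\overline m}$) iff it satisfies the relation on every homogeneous summand, so
$$\Z^{\overline m}=\bigcap_{j\equiv m\,(4)}\Z^{j},\qquad \check{\Z}^{\overline m}=\bigcap_{j\equiv m\,(4)}\check{\Z}^{j}.$$

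For $m=1,2,3$ the inclusions $\Z^{\overline m}\subseteq\Z^{m}$ and $\check{\Z}^{\overline m}\subseteq\check{\Z}^{m}$ are immediate from $\C^{m}\subseteq\C^{\overline m}$, so the real content is the reverse inclusions, i.e.\ the collapse of each intersection to its lowest-grade factor. I would obtain this from the period-$4$ containments $\Z^{\ell}\subseteq\Z^{\ell+4}$ and $\check{\Z}^{\ell}\subseteq\check{\Z}^{\ell+4}$ for $\ell\geq 1$, read off from the explicit forms in Theorem~3.6~\cite{centr}; intuitively they hold because the subalgebra generated by $\C^{\ell}$ already contains $\C^{\ell+4}$ once $\ell\geq1$. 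Granting them, every factor with $j>m$ is redundant and the intersection equals $\Z^{m}$ (resp.\ $\check{\Z}^{m}$). The non-twisted case $m=1$ is even more transparent: $\Z^{1}$ is the centre $\Z$ of $\C$, which lies in every $\Z^{j}$ (cf.\ (\ref{CC3CC4_2})). For $m=0$ the grade-$0$ factor is $\Z^{0}=\C$ (scalars are central) and drops out, so the lowest surviving grade is $4$ and the same collapse yields $\Z^{\overline 0}=\Z^{4}$.

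The twisted case $m=0$ needs one more step. From $\C^{0}=\F e$ the relation $\widehat X e=eX$ forces $\widehat X=X$, hence $\check{\Z}^{0}=\C^{(0)}$; combined with the collapse this gives $\check{\Z}^{\overline 0}=\C^{(0)}\cap\check{\Z}^{4}=\langle\check{\Z}^{4}\rangle_{(0)}$, which equals $\langle\Z^{4}\rangle_{(0)}$ by (\ref{same_even}). I expect the main obstacle to be proving the period-$4$ containments in full generality: one must control the commutation sign $e_Ae_B=(-1)^{|A||B|+|A\cap B|}e_Be_A$ while separately accounting for the radical (Grassmann) directions gathered in $\Lambda$, the parity of $n$, and the low-dimensional boundary cases in which some $\C^{j}$ is empty. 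This is exactly the case analysis supplied by Theorem~3.6~\cite{centr}.
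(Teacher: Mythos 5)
The paper itself gives no proof of this statement: Lemma \ref{centralizers_qt} is imported verbatim as Theorem 5.1 of \cite{centr}. So your attempt is necessarily a different route; in effect you are re-deriving Theorem 5.1 of \cite{centr} from Theorem 3.6 of \cite{centr}. The bookkeeping in your reduction is correct: $\C^{\overline{m}}=\bigoplus_{j\equiv m\mod 4}\C^{j}$, hence $\Z^{\overline{m}}=\bigcap_{j\equiv m\mod 4}\Z^{j}$ and $\check{\Z}^{\overline{m}}=\bigcap_{j\equiv m\mod 4}\check{\Z}^{j}$; likewise $\Z^{0}=\C$, $\check{\Z}^{0}=\C^{(0)}$, and the final step $\C^{(0)}\cap\check{\Z}^{4}=\langle\check{\Z}^{4}\rangle_{(0)}=\langle\Z^{4}\rangle_{(0)}$ via (\ref{same_even}) are all fine.

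The gap is in the collapse step. The containments $\Z^{\ell}\subseteq\Z^{\ell+4}$ and $\check{\Z}^{\ell}\subseteq\check{\Z}^{\ell+4}$ are not bookkeeping: given your reduction they are \emph{equivalent} to the lemma, and you neither prove them nor give a valid reason for them. The intuition you offer --- that the subalgebra generated by $\C^{\ell}$ contains $\C^{\ell+4}$ once $\ell\geq 1$ --- fails precisely in the degenerate setting this paper is about: in the Grassmann algebra $\Lambda_{n}$, $n\geq 7$, any product of grade-$3$ basis elements with a repeated index vanishes (since $e_{a}^{2}=0$), so every nonzero product has disjoint supports and the subalgebra generated by $\Lambda^{3}$ is $\bigoplus_{k}\Lambda^{3k}$, which does not contain $\Lambda^{7}$. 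The containment $\Z^{3}\subseteq\Z^{7}$ does hold there, but for the opposite reason: once a radical index is shared, both $Xe_{B}$ and $e_{B}X$ are zero, so commutation becomes easier, not harder, in degenerate signatures. Thus ``read off from Theorem 3.6'' is carrying the entire content of the lemma, and as you set it up it even requires the explicit forms of $\Z^{j}$, $\check{\Z}^{j}$ for arbitrarily large $j$, which neither this paper nor your sketch records. A self-contained repair needing only what the paper quotes in Section \ref{section_qt}: verify directly that each summand of the explicit $\Z^{m}$ ($m=1,2,3,4$), of $\check{\Z}^{m}$ ($m=1,2,3$), and of $\langle\Z^{4}\rangle_{(0)}$ commutes (respectively twisted-commutes) with all of $\C^{j}$ for every $j\equiv m\mod 4$, using the sign rule $e_{A}e_{B}=(-1)^{|A||B|+|A\cap B|}e_{B}e_{A}$ together with the vanishing of products sharing radical indices; this never requires knowing $\Z^{j}$ or $\check{\Z}^{j}$ for $j>4$.
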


Let us consider two norm functions, which are widely used in the theory of spin groups \cite{ABS,lg1,lounesto}:
\begin{eqnarray}\label{norm_functions}
    \psi(T):=\widetilde{T}T,\qquad \chi(T):=\widehat{\widetilde{T}}T,\qquad \forall T\in\C.
\end{eqnarray}
For example, in the case of the non-degenerate geometric algebra $\C_{p,q}$, the groups  $\Pin_{p,q}$ and $\Spin_{p,q}$ are defined as  (\cite{lounesto,p}):
\begin{eqnarray}
    \!\!\!\!\!\!\!\!\!\!\!\!\!\!\Pin_{p,q}\!\!\!\!\!&:=&\!\!\!\!\!\{T\in \Gamma^{\pm}_{p,q}:\; \psi(T)=\pm e\}\!=\!\{T\in \Gamma^{\pm}_{p,q}:\; \chi(T)=\pm e\},\label{pin_ex}
    \\
    \!\!\!\!\!\!\!\!\!\!\!\!\!\!\Spin_{p,q}\!\!\!\!\!&:=&\!\!\!\!\!\{T\in \langle\Gamma^{\pm}_{p,q}\rangle_{(0)}:\psi(T)=\pm e\}\!=\!\{T\in \langle\Gamma^{\pm}_{p,q}\rangle_{(0)}:\chi(T)=\pm e\}.\label{spin_ex}
\end{eqnarray}
Consider Lemma \ref{lemma_for_AB} about such $T\in\C$ that $\psi(T)$ and $\chi(T)$
are in the centralizers $\Z^{\overline{m}}$ (\ref{def_CC_ov}) and twisted centralizers $\check{\Z}^{\overline{m}}$ (\ref{def_chCC_ov}). 
The proofs of Theorems \ref{maintheo_q} and \ref{maintheo_checkq} are based on this lemma.

\begin{lem}[Lemma 3.3 \cite{AB_aaca}]\label{lemma_for_AB}
For any $T\in\C^{\times}$, in the cases $(k,l)=(0,1),(1,0),$ $(2,3),(3,2)$, we have:
\begin{eqnarray*}
 \!\!\!\!\!\!\!&T\C^{\overline{k}} T^{-1}\subseteq \C^{\overline{kl}} \Leftrightarrow\psi(T)\in\Z^{\overline{k}\times},\qquad \widehat{T} \C^{\overline{k}}T^{-1}\subseteq \C^{\overline{kl}}\Leftrightarrow\chi(T)\in\check{\Z}^{\overline{k}\times},
 \end{eqnarray*}
and in the cases $(k,l)=(0,3),(3,0),(1,2),(2,1)$, we have:
 \begin{eqnarray*}
  \!\!\!\!\!\!\!&T\C^{\overline{k}}T^{-1}\subseteq \C^{\overline{kl}}\Leftrightarrow\chi(T)\in\Z^{\overline{k}\times},\qquad\widehat{T} \C^{\overline{k}} T^{-1}\subseteq \C^{\overline{kl}}\Leftrightarrow\psi(T)\in\check{\Z}^{\overline{k}\times}.
\end{eqnarray*}
\end{lem}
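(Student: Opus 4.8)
The plan is to reduce the four biconditional statements to computations about how the norm functions $\psi$ and $\chi$ interact with the grade involution and reversion, and then to invoke the characterizations of the centralizers $\Z^{\overline{k}}$ and twisted centralizers $\check{\Z}^{\overline{k}}$ from Lemma~\ref{centralizers_qt}. First I would record the behavior of $\psi$ and $\chi$ under the conjugations. The key observation is that the condition $T\C^{\overline{k}}T^{-1}\subseteq\C^{\overline{kl}}$ means that for every $V\in\C^{\overline{k}}$ the conjugate $W:=TVT^{-1}$ lies in $\C^{\overline{kl}}=\C^{\overline{k}}\oplus\C^{\overline{l}}$, so $W$ is annihilated by the two linear projectors cutting out the complementary quaternion types. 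Because $\C^{\overline{k}}$ is exactly the common eigenspace of $\widehat{\phantom{U}}$ and $\widetilde{\phantom{U}}$ with the eigenvalues $((-1)^{k},(-1)^{k(k-1)/2})$ prescribed in (\ref{qtdef}), the statement that $W$ avoids the forbidden quaternion types translates into a single relation among $W$, $\widehat{W}$, and $\widetilde{W}$, which I would make explicit for each of the eight listed pairs $(k,l)$.

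Next I would perform the central calculation: applying reversion (respectively Clifford conjugation) to $W=TVT^{-1}$. Using the antiautomorphism property $\widetilde{AB}=\widetilde{B}\widetilde{A}$ and $\widehat{\widetilde{AB}}=\widehat{\widetilde{B}}\widehat{\widetilde{A}}$, together with the eigenvalue equations $\widehat{V}=(-1)^{k}V$ and $\widetilde{V}=(-1)^{k(k-1)/2}V$ for $V\in\C^{\overline{k}}$, one rewrites $\widetilde{W}$ and $\widehat{\widetilde{W}}$ in terms of $\widetilde{T}T=\psi(T)$ and $\widehat{\widetilde{T}}T=\chi(T)$ conjugating $V$. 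Concretely, $\widetilde{TVT^{-1}}=\widetilde{T^{-1}}\widetilde{V}\widetilde{T}$, and after inserting the eigenvalue of $\widetilde{V}$ and reorganizing, the requirement that $W$ have the correct quaternion type becomes the assertion that $\psi(T)$ (or $\chi(T)$, depending on whether an even or odd number of grade involutions is involved) commutes with every $V\in\C^{\overline{k}}$. The split into the two groups of cases $\{(0,1),(1,0),(2,3),(3,2)\}$ versus $\{(0,3),(3,0),(1,2),(2,1)\}$ is governed precisely by whether the relevant parity of $k$ makes $\psi$ or $\chi$ the natural norm to appear; I expect the bookkeeping of signs $(-1)^{k(k-1)/2}$ across $k=0,1,2,3$ to be where the two case-groups separate.

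Having reached a commutation relation, I would close the argument with Lemma~\ref{centralizers_qt}: the condition ``$\psi(T)$ commutes with all of $\C^{\overline{k}}$'' is by definition (\ref{def_CC_ov}) membership $\psi(T)\in\Z^{\overline{k}}$, and invertibility of $\psi(T)$ (which follows from $T\in\C^{\times}$ since $\psi$ is multiplicative up to the antiautomorphism) upgrades this to $\psi(T)\in\Z^{\overline{k}\times}$. For the twisted versions $\widehat{T}\C^{\overline{k}}T^{-1}\subseteq\C^{\overline{kl}}$ the same computation carries one extra grade involution on the left factor, which converts the ordinary commutation relation into the twisted one $\widehat{X}V=VX$ from (\ref{def_chCC_ov}), so the norm lands in $\check{\Z}^{\overline{k}\times}$ instead. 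Each of the four displayed biconditionals then matches one of these four combinations of (norm, centralizer type). The main obstacle I anticipate is purely the sign/parity bookkeeping: one must verify for all four values of $k$ simultaneously that the products of eigenvalues of $\widehat{\phantom{U}}$ and $\widetilde{\phantom{U}}$ assemble into exactly the two claimed case-groups, and that the forbidden quaternion types are eliminated by the derived commutation condition rather than merely constrained; making this precise requires checking that membership in $\C^{\overline{kl}}$ is equivalent to, not just implied by, the single relation extracted in the first step.
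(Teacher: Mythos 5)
There is nothing to compare against inside this paper: the lemma is imported verbatim from Lemma 3.3 of \cite{AB_aaca} (note the attribution in its header) and is only \emph{used} here, in the proofs of Theorems \ref{maintheo_q} and \ref{maintheo_checkq}, so the paper contains no proof of it.

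Judged on its own, your reconstruction is correct and is the natural argument, in the same computational style the authors use elsewhere (e.g.\ Lemma \ref{lemma_QABP}). The one point you left as a worry resolves immediately: for the listed pairs, $\C^{\overline{kl}}$ is \emph{exactly} a $\pm1$-eigenspace of a single conjugation, namely $\C^{\overline{01}}=\{U:\widetilde{U}=U\}$, $\C^{\overline{23}}=\{U:\widetilde{U}=-U\}$ for the first case group and $\C^{\overline{03}}=\{U:\widehat{\widetilde{U}}=U\}$, $\C^{\overline{12}}=\{U:\widehat{\widetilde{U}}=-U\}$ for the second, because $\C$ is the direct sum of the four simultaneous eigenspaces (\ref{qtdef}); hence membership of $W=TVT^{-1}$ (or $\widehat{T}VT^{-1}$) in $\C^{\overline{kl}}$ is equivalent to, not merely implied by, the single relation, and after applying the antiautomorphism, inserting the eigenvalues of $V$, and multiplying by $\widetilde{T}$ (resp.\ $\widehat{\widetilde{T}}$) on the left and $T$ on the right, it becomes precisely the commutation condition defining $\Z^{\overline{k}}$ or $\check{\Z}^{\overline{k}}$ in (\ref{def_CC_ov})--(\ref{def_chCC_ov}). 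This also shows the case split is governed by which conjugation the types $k$ and $l$ share an eigenvalue of, rather than by ``the parity of $k$'' as you phrased it. Two wording fixes: invertibility of the norms is immediate because $\widetilde{T}$ and $\widehat{\widetilde{T}}$ are units with inverses $\widetilde{T^{-1}}$ and $\widehat{\widetilde{T^{-1}}}$, not because $\psi$ is ``multiplicative up to the antiautomorphism'' (it is not multiplicative); and in the twisted case the identity $\widehat{\chi(T)}=\widetilde{T}\widehat{T}$ (resp.\ $\widehat{\psi(T)}=\widehat{\widetilde{T}}\widehat{T}$) is the step that produces the twisted relation $\widehat{X}V=VX$ and explains why $\psi$ and $\chi$ swap roles between the plain and twisted statements within each case group.
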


\section{Related Works}\label{section_relworks}

In this work, we consider the mappings $\ad_T$ (\ref{ar}),  $\tilde{\ad}_T$ (\ref{twa22}), and $\check{\ad}_T$ (\ref{twa1}) that leave invariant the subspaces $\C^{\overline{k}}$ (\ref{qtdef}), $k=0,1,2,3$,  determined by the grade involution and reversion in degenerate and non-degenerate geometric algebras $\C$. This work generalizes the results of the papers \cite{OnInner} and \cite{GenSpin}, where the same questions regarding $\ad_T$ and $\check{\ad}_T$ respectively are considered in the case of the non-degenerate algebra $\C_{p,q}$. 

The paper \cite{OnSomeLie} discusses the groups preserving the subspaces of fixed parity $\C^{(0)}$ and $\C^{(1)}$ under the adjoint representation $\ad$ and the twisted adjoint representations $\check{\ad}$ and $\tilde{\ad}$ in the degenerate and non-degenerate $\C$. That paper introduces the four Lie groups
\begin{eqnarray}
    \P^{\pm}:=\P^{\pm}_{p,q,r}:=\C^{(0)\times}\cup\C^{(1)\times},&&
    \P := \P_{p,q,r}:=\P^{\pm}\Z^{\times},\label{prev_gr_0}
    \\
    \P^{\pm\Lambda} := \P^{\pm\Lambda}_{p,q,r}:=\P^{\pm}\Lambda^{\times},&&
    \P^{\Lambda} :=\P^{\Lambda}_{p,q,r}:= \P\Lambda^{\times},\label{prev_gr_0_0}
\end{eqnarray}
and proves that 
\begin{eqnarray}
&&\P=\Gamma^{(1)}\subseteq\P^{\Lambda}=\Gamma^{(0)}=\tilde{\Gamma}^{(0)},\label{prev_gr}
\\
&&\P^{\pm}=\check{\Gamma}^{(0)}\subseteq\P^{\pm\Lambda}=\check{\Gamma}^{(1)}=\tilde{\Gamma}^{(1)},\label{prev_gr_2}
\end{eqnarray}
where, for $k=0,1$,
\begin{eqnarray}
  \Gamma^{(k)}&:=&\Gamma^{(k)}_{p,q,r}:=\{T\in\C^{\times}:\; \ad_{T}(\C^{(k)})\subseteq\C^{(k)}\},\label{p_f1}
  \\
  \check{\Gamma}^{(k)}&:=&\check{\Gamma}^{(k)}_{p,q,r}:=\{T\in\C^{\times}:\; \check{\ad}_{T}(\C^{(k)})\subseteq\C^{(k)}\},\label{p_f0}
  \\
  \tilde{\Gamma}^{(k)}&:=&\tilde{\Gamma}^{(k)}_{p,q,r}:=\{T\in\C^{\times}:\; \tilde{\ad}_{T}(\C^{(k)})\subseteq\C^{(k)}\}.\label{p_f}
\end{eqnarray}

The papers \cite{AB_cgi,AB_aaca} introduce and study the groups preserving direct sums of the subspaces $\C^{\overline{k}}$, $k=0,1,2,3$, under $\ad$, $\check{\ad}$, and $\tilde{\ad}$ in any degenerate and non-degenerate $\C$. The following groups defined using centralizers \eqref{def_Zm}, twisted centralizers \eqref{def_chZm}, and norm functions \eqref{norm_functions}  are considered:
\begin{eqnarray}
\!\!\!\!\!\!\!\!\!\!\!\!\!\!\!\!\!\!\!\!&\A^{\overline{01}} := \A^{\overline{01}}_{p,q,r}:=\{T\in\C^{\times}: \quad {\widetilde{T}}T\in\Z^{1\times}\}=\psi^{-1}(\Z^{1\times}), \label{def_A01}
\\
\!\!\!\!\!\!\!\!\!\!\!\!\!\!\!\!\!\!\!\!&\B^{\overline{12}} := \B^{\overline{12}}_{p,q,r}:=\{T\in\C^{\times}:\quad \widehat{\widetilde{T}}T\in\Z^{1\times}\}=\chi^{-1}(\Z^{1\times}),\label{def_B12}
\\
\!\!\!\!\!\!\!\!\!\!\!\!\!\!\!\!\!\!\!\!&\A^{\overline{23}}:=\A^{\overline{23}}_{p,q,r}:=
 \{T\in\C^{\times}: \quad\widetilde{T}T\in(\Z^{2}\cap\Z^{3})^{\times}\}=\psi^{-1}((\Z^{2}\cap\Z^{3})^{\times}),\label{def_A23}
\\
\!\!\!\!\!\!\!\!\!\!\!\!\!\!\!\!\!\!\!\!& \B^{\overline{03}} := \B^{\overline{03}}_{p,q,r}:=\{T\in\C^{\times}: \quad\widehat{\widetilde{T}}T\in\Z^{3\times}\}=\chi^{-1}(\Z^{3\times}),\label{def_B03}
\\
\!\!\!\!\!\!\!\!\!\!\!\!\!\!\!\!\!\!\!\!&\check{\A}^{\overline{12}} :=\check{\A}^{\overline{12}}_{p,q,r}:=\{T\in\C^{\times}:\quad\widetilde{T}T\in(\check{\Z}^{1}\cap\check{\Z}^{2})^{\times}\}=\psi^{-1}((\check{\Z}^{1}\cap\check{\Z}^{2})^{\times}),\label{def_chA12}
\\
\!\!\!\!\!\!\!\!\!\!\!\!\!\!\!\!\!\!\!\!&\check{\A}^{\overline{03}} :=\check{\A}^{\overline{03}}_{p,q,r}:= \{T\in\C^{\times}:\quad\widetilde{T}T\in(\Z^{3}\cap\C^{(0)})^{\times}\}=\psi^{-1}((\Z^{3}\cap\C^{(0)})^{\times}),\label{def_chA03}
\\
\!\!\!\!\!\!\!\!\!\!\!\!\!\!\!\!\!\!\!\!&\check{\B}^{\overline{01}}:=\check{\B}^{\overline{01}}_{p,q,r}:= \{T\in\C^{\times}\!: \quad\widehat{\widetilde{T}}T\in(\Z^1\cap\C^{(0)})^{\times}\}=
\chi^{-1}((\Z^1\cap\C^{(0)})^{\times}),\label{def_chB01}
\\
\!\!\!\!\!\!\!\!\!\!\!\!\!\!\!\!\!\!\!\!& \check{\B}^{\overline{23}}:=\check{\B}^{\overline{23}}_{p,q,r}:=\{T\in\C^{\times}:\quad \widehat{\widetilde{T}}T\in(\check{\Z}^2\cap\check{\Z}^3)^{\times}\}=\chi^{-1}((\check{\Z}^2\cap\check{\Z}^3)^{\times}),\label{def_chB23}
\end{eqnarray}
and we have
\begin{eqnarray}
     \!\!\!\!\!\!\!\!\!\!&\A^{\overline{01}} = \Gamma^{\overline{01}},\quad \A^{\overline{23}} = \Gamma^{\overline{23}},\quad \B^{\overline{12}}= \Gamma^{\overline{12}},\quad \B^{\overline{03}}= \Gamma^{\overline{03}},\label{prev_gr_3}
    \\
     \!\!\!\!\! \!\!\!\!\!&\check{\A}^{\overline{12}}= \check{\Gamma}^{\overline{12}},\quad \check{\A}^{\overline{03}} = \check{\Gamma}^{\overline{03}},\quad
    \check{\B}^{\overline{01}}= \check{\Gamma}^{\overline{01}},\quad \check{\B}^{\overline{23}} = \check{\Gamma}^{\overline{23}},\label{prev_gr_4}
\end{eqnarray}
where, for $k,l=0,1,2,3$,
\begin{eqnarray}
    \Gamma^{\overline{kl}}&:=&\Gamma^{\overline{kl}}_{p,q,r}:= \{T\in\C^{\times}:\quad \ad_{T}(\C^{\overline{kl}})\subseteq\C^{\overline{kl}}\},\label{def_Gamma_kl}
    \\
    \check{\Gamma}^{\overline{kl}} &:=&\check{\Gamma}^{\overline{kl}}_{p,q,r}:= \{T\in\C^{\times}:\quad \check{\ad}_{T}(\C^{\overline{kl}})\subseteq\C^{\overline{kl}}\}. \label{def_chGamma_kl}
\end{eqnarray}

We show the connections between the groups (\ref{prev_gr})--(\ref{prev_gr_2}), \eqref{prev_gr_3}--\eqref{prev_gr_4} and the groups considered in Section \ref{section_qt} of this work.

\section{The Groups Preserving the Subspaces Determined by the Grade Involution and Reversion Under $\ad$, $\check{\ad}$, and $\tilde{\ad}$}\label{section_qt}

Let us consider setwise stabilizers (see, for example, \cite{alg}) of the  subspaces $\C^{\overline{k}}$ (\ref{qtdef}), $k=0,1,2,3$, in the group $\C^{\times}$ under the group actions $\ad$, $\check{\ad}$, and $\tilde{\ad}$.
We use the following notation for the groups preserving these subspaces under the adjoint representation $\ad$ (\ref{ar}):
\begin{eqnarray}
\Gamma^{\overline{k}} := \Gamma^{\overline{k}}_{p,q,r}:=\{T\in\C^{\times}:\quad \ad_T(\C^{\overline{k}}):=T\C^{\overline{k}}T^{-1}\subseteq\C^{\overline{k}}\}.\label{gamma_ov_k}
\end{eqnarray}
Note that the groups $\Gamma^{\overline{k}}$ are the normalizers of  $\C^{\overline{k}}$ in $\C^{\times}$.
The following notation is used for the stabilizers of $\C^{\overline{k}}$, $k=0,1,2,3$, in $\C^{\times}$ under the twisted adjoint representation $\check{\ad}$ (\ref{twa1}):
\begin{eqnarray}
\check{\Gamma}^{\overline{k}} :=\check{\Gamma}_{p,q,r}^{\overline{k}}:= \{T\in\C^{\times}:\quad \check{\ad}_T(\C^{\overline{k}}):=\widehat{T}\C^{\overline{k}}T^{-1}\subseteq\C^{\overline{k}}\},\label{gamma_ov_chk}
\end{eqnarray}
and $\tilde{\ad}$ (\ref{twa22}):
\begin{eqnarray}
\tilde{\Gamma}^{\overline{k}}:=\tilde{\Gamma}^{\overline{k}}_{p,q,r}:= \{T\in\C^{\times}:\quad \tilde{\ad}_T(\C^{\overline{k}})\subseteq\C^{\overline{k}}\}.\label{gamma_ov_tik}
\end{eqnarray}
The groups $\tilde{\Gamma}^{\overline{k}}$, $k=0,1,2,3$, are related to the groups ${\Gamma}^{\overline{k}}$ and $\check{\Gamma}^{\overline{k}}$ as follows:
\begin{eqnarray}\label{def_tilde_gk}
    \tilde{\Gamma}^{\overline{k}}=
    \left\lbrace
    \begin{array}{lll}
    \check{\Gamma}^{\overline{k}},&&k=1,3,
    \\
    {\Gamma}^{\overline{k}},&& k=0,2,
    \end{array}
    \right.
\end{eqnarray}
since $\tilde{\ad}_T(\C^{\overline{k}})={\ad}_T(\C^{\overline{k}})$ in the cases $k=0,2$ by (\ref{ad_t_1}) and $\tilde{\ad}_T(\C^{\overline{k}})=\check{\ad}_T(\C^{\overline{k}})$ in the cases $k=1,3$ by (\ref{ad_t_2}). In Sections \ref{section_Q} and \ref{section_chQ}, we find the equivalent definitions of the groups  $\Gamma^{\overline{k}}$ and $\check{\Gamma}^{\overline{k}}$, $k=0,1,2,3$.

\subsection{The Groups $\Q^{\overline{0}}$, $\Q^{\overline{1}}$, $\Q^{\overline{2}}$, $\Q^{\overline{3}}$, $\Gamma^{\overline{0}}$, $\Gamma^{\overline{1}}$, $\Gamma^{\overline{2}}$, and $\Gamma^{\overline{3}}$}\label{section_Q}

Let us consider the groups $\Q^{\overline{1}}$, $\Q^{\overline{2}}$, $\Q^{\overline{3}}$, and $\Q^{\overline{0}}$:
\begin{eqnarray}
\!\!\!\!\!\!\!\!\!\!\!\!\!\!\!\!\!\Q^{\overline{1}}\!\!\!\!\!&:=&\!\!\!\!\!\Q^{\overline{1}}_{p,q,r}:=\{ T\in\C^{\times}:\;\; \widetilde{T}T\in\Z^{1\times}=\ker(\ad),\;\;\widehat{\widetilde{T}}T\in\Z^{1\times}=\ker(\ad)\}\label{def_Q1}
\\
\!\!\!\!\!\!\!\!\!&=& \psi^{-1}(\Z^{1\times})\cap\chi^{-1}(\Z^{1\times}),
\\
\!\!\!\!\!\!\!\!\!\!\!\!\!\!\!\!\!\Q^{\overline{2}}\!\!\!\!\!&:=&\!\!\!\!\!\Q^{\overline{2}}_{p,q,r}:=\{ T\in\C^{\times}:\;\; \widetilde{T}T\in\Z^{2\times},\;\; \widehat{\widetilde{T}}T\in\Z^{2\times}\}\label{def_Q2}
\\
\!\!\!\!\!\!\!\!\!&=& \psi^{-1}(\Z^{2\times})\cap\chi^{-1}(\Z^{2\times}),
\\
\!\!\!\!\!\!\!\!\!\!\!\!\!\!\!\!\!\Q^{\overline{3}}\!\!\!\!\!&:=&\!\!\!\!\!\Q^{\overline{3}}_{p,q,r}:=\{T\in\C^{\times}:\;\; \widetilde{T}T\in\Z^{3\times},\;\; \widehat{\widetilde{T}}T\in\Z^{3\times}\}\label{def_Q3}
\\
\!\!\!\!\!\!\!\!\!&=& \psi^{-1}(\Z^{3\times})\cap\chi^{-1}(\Z^{3\times}),
\\
\!\!\!\!\!\!\!\!\!\!\!\!\!\!\!\!\!\Q^{\overline{0}}\!\!\!\!\!&:=&\!\!\!\!\!\Q^{\overline{0}}_{p,q,r}:=\{T\in\C^{\times}:\;\; \widetilde{T}T\in\Z^{4\times},\;\; \widehat{\widetilde{T}}T\in\Z^{4\times}\}\label{def_Q0}
\\
\!\!\!\!\!\!\!\!\!&=& \psi^{-1}(\Z^{4\times})\cap\chi^{-1}(\Z^{4\times}),
\end{eqnarray}
where $\ker(\ad)$ (\ref{ker_ad}) is the kernel of the adjoint representation $\ad$ (\ref{ar}), $\Z^1$, $\Z^2$, $\Z^3$, and $\Z^4$ are the centralizers of the subspaces $\C^1$, $\C^{2}$, $\C^{3}$, and $\C^{4}$ respectively considered in Section \ref{section_centralizers}. By \cite{centr}, we have
\begin{eqnarray*}
\!\!\!\!\!\!\!\!\!\!\!\!&&\Z^1=\Z=\left\lbrace
    \begin{array}{lll}
    \Lambda^{(0)}\oplus\C^{n},&&\mbox{$n$ is odd},
    \\
    \Lambda^{(0)},&&\mbox{$n$ is even},
     \end{array}
    \right.\label{cc_1}
\\
\!\!\!\!\!\!\!\!\!\!\!\!&&\Z^2=
\left\lbrace
    \begin{array}{lll}
\Lambda\oplus\C^{n},&& r\neq n,
\\
\Lambda,&& r=n,
\end{array}
    \right.\label{cc_2}
\\
\!\!\!\!\!\!\!\!\!\!\!\!&&\Z^3\!=\!\left\lbrace
            \begin{array}{lll}\label{cc_3}
            \!\!\!\Lambda^{(0)}\!\oplus\!\Lambda^{n-2}\oplus \{\C^{1}_{p,q}(\Lambda^{n-3}\oplus\Lambda^{n-2})\}\oplus \{\C^{2}_{p,q}\Lambda^{n-3}\}\oplus\C^{n} &&\mbox{$n$ is odd},
            \\
            \!\!\!\Lambda^{(0)}\!\oplus\!\Lambda^{n-1}\oplus \{\C^{1}_{p,q}\Lambda^{\geq n-2}\}\oplus \{\C^{2}_{p,q}\Lambda^{n-2}\}, &&\mbox{$n$ is even},
            \end{array}
            \right.\nonumber
\\
\!\!\!\!\!\!\!\!\!\!\!\!&&\Z^4=\Lambda\oplus \{\C^{1}_{p,q}(\Lambda^{n-3}\oplus\Lambda^{n-2})\}\oplus \{\C^{2}_{p,q}(\Lambda^{n-4}\oplus\Lambda^{n-3})\}\oplus\C^{n}.\label{cc_4}
\end{eqnarray*}
The cases of $\Z^{2}$ and $\check{\Z}^1$ are also proved in detail in \cite{OnSomeLie}.

The groups $\Q^{\overline{1}}$, $\Q^{\overline{2}}$, $\Q^{\overline{3}}$, and $\Q^{\overline{0}}$ are generalizations of the groups $\Q$ and $\Q'$ \cite{OnInner} in the non-degenerate geometric algebras $\C_{p,q}$  to the case of the degenerate geometric algebras $\C$ and coincide with them if $r=0$:
\begin{eqnarray*}
    \Q^{\overline{1}}_{p,q}=\Q^{\overline{3}}_{p,q}=\Q,\quad
     \Q^{\overline{0}}_{p,q}=\Q^{\overline{2}}_{p,q}=
     \left\lbrace
    \begin{array}{lll}
    \Q,\;\;n=1,2,3\mod{4},
    \\
    \Q',\;\;n=0\mod{4},
    \end{array}
    \right.
    \; n\neq2,3,
\end{eqnarray*}
and $\Q^{\overline{1}}_{p,q}=\Q^{\overline{2}}_{p,q}=\Q$ if $n=2,3$.

\begin{thm}\label{maintheo_q}
In the degenerate and non-degenerate geometric algebras $\C$, we have
\begin{eqnarray*}
&\Q^{\overline{1}}=\Gamma^{\overline{1}},\quad\Q^{\overline{2}}=\Gamma^{\overline{2}},\quad \Q^{\overline{3}}=\Gamma^{\overline{3}}, \quad\Q^{\overline{0}}=\Gamma^{\overline{0}},
\end{eqnarray*}
where
\begin{eqnarray}\label{dop_1}
\Gamma^{\overline{1}}\subseteq\Gamma^{\overline{m}}\subseteq\Gamma^{\overline{0}},\qquad m=0,1,2,3.
\end{eqnarray}
\end{thm}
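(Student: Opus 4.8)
The plan is to prove the four identities $\Q^{\overline{k}}=\Gamma^{\overline{k}}$ in a single uniform argument built on Lemma \ref{lemma_for_AB}, and then to read off the inclusion chain \eqref{dop_1} from the purely algebraic inclusions among centralizers recorded in Remark \ref{CC3CC4}. The guiding observation is that Lemma \ref{lemma_for_AB} only characterizes when $\ad_T$ maps $\C^{\overline{k}}$ \emph{into the larger space} $\C^{\overline{kl}}=\C^{\overline{k}}\oplus\C^{\overline{l}}$, whereas membership in $\Gamma^{\overline{k}}$ demands that $\ad_T$ map $\C^{\overline{k}}$ \emph{into itself}. The bridge between the two is the elementary identity $\C^{\overline{kl_1}}\cap\C^{\overline{kl_2}}=\C^{\overline{k}}$, valid whenever $l_1\neq l_2$ are both distinct from $k$, which follows at once from the direct sum decomposition $\C=\C^{\overline{0}}\oplus\C^{\overline{1}}\oplus\C^{\overline{2}}\oplus\C^{\overline{3}}$.

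First I would fix $k$ and locate, in Lemma \ref{lemma_for_AB}, the two indices for which the inclusions $T\C^{\overline{k}}T^{-1}\subseteq\C^{\overline{kl}}$ are governed by $\psi$ and by $\chi$ respectively. For $k=1$ these are $l_{\psi}=0$ (giving $T\C^{\overline{1}}T^{-1}\subseteq\C^{\overline{10}}\Leftrightarrow\psi(T)\in\Z^{\overline{1}\times}$) and $l_{\chi}=2$ (giving $T\C^{\overline{1}}T^{-1}\subseteq\C^{\overline{12}}\Leftrightarrow\chi(T)\in\Z^{\overline{1}\times}$); the analogous pairs are $(l_{\psi},l_{\chi})=(1,3)$ for $k=0$, $(3,1)$ for $k=2$, and $(2,0)$ for $k=3$. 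Since these two indices are always distinct and different from $k$, the chain of equivalences
\begin{eqnarray*}
T\in\Gamma^{\overline{k}}&\Leftrightarrow& T\C^{\overline{k}}T^{-1}\subseteq\C^{\overline{kl_{\psi}}}\text{ and }T\C^{\overline{k}}T^{-1}\subseteq\C^{\overline{kl_{\chi}}}\\
&\Leftrightarrow&\psi(T)\in\Z^{\overline{k}\times}\text{ and }\chi(T)\in\Z^{\overline{k}\times}
\end{eqnarray*}
holds: the first step is the intersection identity above (the forward direction is immediate since $\C^{\overline{k}}\subseteq\C^{\overline{kl}}$, the backward one is the collapse of the intersection to $\C^{\overline{k}}$), and the second is Lemma \ref{lemma_for_AB}. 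Finally I would invoke Lemma \ref{centralizers_qt}, which gives $\Z^{\overline{k}}=\Z^{k}$ for $k=1,2,3$ and $\Z^{\overline{0}}=\Z^{4}$, to recognize the last two conditions as exactly the defining conditions of $\Q^{\overline{k}}$ in \eqref{def_Q1}--\eqref{def_Q0}. This yields $\Q^{\overline{k}}=\Gamma^{\overline{k}}$ for all four values of $k$ at once.

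For the chain \eqref{dop_1} I would work directly with the norm-function descriptions just obtained, writing $\Gamma^{\overline{m}}=\psi^{-1}(\Z^{c(m)\times})\cap\chi^{-1}(\Z^{c(m)\times})$ with $c(1)=1$, $c(2)=2$, $c(3)=3$, $c(0)=4$. Because the assignment $S\mapsto\psi^{-1}(S)\cap\chi^{-1}(S)$ is monotone in $S$, the two inclusions reduce to inclusions of centralizers. The inclusion $\Gamma^{\overline{1}}\subseteq\Gamma^{\overline{m}}$ then follows from $\Z^{1\times}\subseteq\Z^{c(m)\times}$, which is part of \eqref{CC3CC4_2}, while $\Gamma^{\overline{m}}\subseteq\Gamma^{\overline{0}}$ follows from $\Z^{c(m)\times}\subseteq\Z^{4\times}$, which is \eqref{CC3CC4_3} for $m=1,2,3$ and trivial for $m=0$ (the cases where some index exceeds $n$ are covered since then the relevant centralizer equals $\C$).

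I expect no serious obstacle here: once Lemma \ref{lemma_for_AB} is granted, the argument is essentially bookkeeping. The only point demanding care is the case-by-case confirmation that, for each $k$, the two indices supplied by Lemma \ref{lemma_for_AB} are genuinely distinct, so that $\C^{\overline{kl_{\psi}}}\cap\C^{\overline{kl_{\chi}}}$ truly collapses to $\C^{\overline{k}}$ rather than to a larger subspace. This is precisely why both a condition on $\psi$ and a condition on $\chi$ enter the definition of $\Q^{\overline{k}}$: a single norm function controls only one inclusion $T\C^{\overline{k}}T^{-1}\subseteq\C^{\overline{kl}}$ and is insufficient to force preservation of $\C^{\overline{k}}$ itself.
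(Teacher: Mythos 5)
Your proposal is correct and follows essentially the same route as the paper's own proof: both rest on the observation that $\C^{\overline{km}}\cap\C^{\overline{kl}}=\C^{\overline{k}}$ for the two neighbouring indices $m,l$, then apply Lemma \ref{lemma_for_AB} to convert the two inclusions into the $\psi$- and $\chi$-conditions, identify $\Z^{\overline{k}}$ via Lemma \ref{centralizers_qt}, and deduce the chain \eqref{dop_1} from the centralizer inclusions in Remark \ref{CC3CC4}. Your version is in fact slightly more explicit than the paper's, in that you track case by case which of the two indices is governed by $\psi$ and which by $\chi$ in Lemma \ref{lemma_for_AB}, a detail the paper leaves implicit.
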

\begin{proof}
For fixed $k=0,1,2,3\mod{4}$ and $m=(k-1)\mod{4}$, $l=(k+1)\mod{4}$, we get
\begin{eqnarray}
    \!\!\!\!\!\!\!\!\!\!\!\!\!\!\!\Gamma^{\overline{k}}&=&\{T\in\C^{\times}:\quad T\C^{\overline{k}}T^{-1}\subseteq(\C^{\overline{km}}\cap\C^{\overline{kl}})\}
    \\
    \!\!\!\!\!\!\!\!\!\!\!\!\!\!\!&=&\{T\in\C^{\times}:\quad T\C^{\overline{k}}T^{-1}\subseteq\C^{\overline{km}},\quad T\C^{\overline{k}}T^{-1}\subseteq\C^{\overline{kl}}\}
    \\
    \!\!\!\!\!\!\!\!\!\!\!\!\!\!\!&=&\{T\in\C^{\times}:\quad\widetilde{T}T\in\Z^{\overline{k}\times},\quad \widehat{\widetilde{T}}T\in\Z^{\overline{k}\times}\}=\Q^{\overline{k}},\label{Q_to_pr}
\end{eqnarray}
where we use Lemma \ref{lemma_for_AB} in the first equality (\ref{Q_to_pr}) and $\Z^{\overline{k}}=\Z^{k}$, $k=1,2,3$, and $\Z^{\overline{0}}=\Z^{4}$ by Lemma \ref{centralizers_qt} in the second equality (\ref{Q_to_pr}). The inclusions in (\ref{dop_1}) follow from the definitions (\ref{def_Q1})--(\ref{def_Q0}) and the statements (\ref{CC3CC4_2}) and (\ref{CC3CC4_3}) of Remark \ref{CC3CC4}.
\end{proof}

\begin{rem}\label{rem_cg}
In the case of the small dimensions $n\leq 4$, the Clifford groups (\ref{def_cg}) coincide with the groups ${\Q}^{\overline{1}}$ because of $\C^{\overline{1}}=\C^{1}$:
\begin{eqnarray}
\Gamma={\Q}^{\overline{1}},\qquad n\leq 4.
\end{eqnarray}
That is why the groups $\Q^{\overline{k}}$ can be considered as generalizations of Clifford groups.
\end{rem}

\begin{rem}\label{rem_Q_AB}
Note that the groups $\Q^{\overline{0}}$ \eqref{def_Q0}, $\Q^{\overline{1}}$ \eqref{def_Q1}, $\Q^{\overline{2}}$ \eqref{def_Q2}, and $\Q^{\overline{3}}$ \eqref{def_Q3} are, by definitions, closely related to the groups $\A^{\overline{01}}$ \eqref{def_A01}, $\B^{\overline{12}}$ \eqref{def_B12}, $\A^{\overline{23}}$ \eqref{def_A23}, and $\B^{\overline{03}}$ \eqref{def_B03} considered in the paper \cite{AB_aaca} (see Section \ref{section_relworks}). Specifically, we have
    \begin{eqnarray}
        &\Q^{\overline{1}} = (\A^{\overline{01}}\cap\B^{\overline{12}}),\qquad (\A^{\overline{23}}\cap\B^{\overline{03}})\subseteq\Q^{\overline{3}},
        \\
        &(\A^{\overline{01}}\cap\B^{\overline{03}})\subseteq\Q^{\overline{0}},\qquad (\A^{\overline{23}}\cap\B^{\overline{12}})\subseteq\Q^{\overline{2}},
    \end{eqnarray}
where for the group $\Q^{\overline{0}}$, we use that $\Z^{1}\subseteq\Z^4$ and $\Z^3\subseteq\Z^4$, and for the group $\Q^{\overline{2}}$, we use that $\Z^1\subseteq\Z^2$ (see Remark \ref{CC3CC4}).
\end{rem}

\begin{lem}\label{lemma_QABP}
    We have the following connections between the groups $\Q^{\overline{1}}$ and $\Q^{\overline{2}}$ and the groups $\P$ \eqref{prev_gr_0} and $\P^{\Lambda}$ \eqref{prev_gr_0_0}:
    \begin{eqnarray}
    &\Q^{\overline{1}}\subseteq\P,\qquad \Q^{\overline{2}}\subseteq\P^{\Lambda}.\label{lemma_QABP_2}
    \end{eqnarray}
\end{lem}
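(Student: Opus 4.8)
The plan is to reduce both inclusions to the identifications $\P = \Gamma^{(1)}$ and $\P^{\Lambda} = \Gamma^{(0)}$ recorded in (\ref{prev_gr}), and then to combine Theorem \ref{maintheo_q} with the chain (\ref{dop_1}). The starting observation is that the grade involution splits $\C$ into even and odd parts, while (\ref{qtdef}) refines each part into quaternion types. Reading off the signs in (\ref{def_even_odd}) and (\ref{qtdef}), one gets $\C^{(0)} = \C^{\overline{0}}\oplus\C^{\overline{2}}$ (the types with $\widehat{U}=U$) and $\C^{(1)} = \C^{\overline{1}}\oplus\C^{\overline{3}}$ (the types with $\widehat{U}=-U$). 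Since $\ad_T$ is linear, preserving each summand $\C^{\overline{k}}$ is equivalent to preserving their direct sum; this is the bridge between the quaternion-type groups $\Gamma^{\overline{k}}$ and the parity groups $\Gamma^{(0)},\Gamma^{(1)}$.

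For $\Q^{\overline{1}}\subseteq\P$ I would first rewrite $\Q^{\overline{1}} = \Gamma^{\overline{1}}$ by Theorem \ref{maintheo_q}. The chain (\ref{dop_1}) gives $\Gamma^{\overline{1}}\subseteq\Gamma^{\overline{3}}$ (the case $m=3$), so any $T\in\Q^{\overline{1}}$ satisfies both $T\C^{\overline{1}}T^{-1}\subseteq\C^{\overline{1}}$ and $T\C^{\overline{3}}T^{-1}\subseteq\C^{\overline{3}}$. Adding these and using $\C^{(1)} = \C^{\overline{1}}\oplus\C^{\overline{3}}$ yields $\ad_T(\C^{(1)})\subseteq\C^{(1)}$, i.e. $T\in\Gamma^{(1)}$. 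Finally (\ref{prev_gr}) identifies $\Gamma^{(1)} = \P$, which closes this half.

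The inclusion $\Q^{\overline{2}}\subseteq\P^{\Lambda}$ is entirely parallel. By Theorem \ref{maintheo_q} we have $\Q^{\overline{2}} = \Gamma^{\overline{2}}$, and (\ref{dop_1}) gives $\Gamma^{\overline{2}}\subseteq\Gamma^{\overline{0}}$. Hence any $T\in\Q^{\overline{2}}$ preserves both $\C^{\overline{2}}$ and $\C^{\overline{0}}$ under $\ad$, so it preserves $\C^{(0)} = \C^{\overline{0}}\oplus\C^{\overline{2}}$, placing it in $\Gamma^{(0)}$. Invoking $\Gamma^{(0)} = \P^{\Lambda}$ from (\ref{prev_gr}) finishes the argument.

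I do not expect a serious obstacle: the proof is a bookkeeping combination of already established facts. The only point requiring care is matching the correct inclusion direction in (\ref{dop_1})—one must pair $\C^{\overline{1}}$ with $\C^{\overline{3}}$ in the odd case and $\C^{\overline{2}}$ with $\C^{\overline{0}}$ in the even case, which is exactly what the chain supplies. It is also worth noting that the reverse inclusions are not claimed, so the lemma is genuinely one of containment; the proof should not attempt to prove equality, since the extra central factors of $\Z^{\times}$ and $\Lambda^{\times}$ that define $\P$ and $\P^{\Lambda}$ impose no norm condition and hence need not lie in $\Q^{\overline{1}}$ or $\Q^{\overline{2}}$.
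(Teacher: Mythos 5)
Your proof is correct, but it takes a genuinely different route from the paper's. The paper never invokes Theorem \ref{maintheo_q} here: it argues directly from the norm conditions. Given $T\in\Q^{\overline{1}}$, it writes $\widetilde{T}T=W\in\Z^{1\times}$ and $\widehat{\widetilde{T}}T=U\in\Z^{1\times}$, computes $\widehat{T^{-1}}T=(\widehat{U^{-1}}\widetilde{T})(\widetilde{T^{-1}}W)=\widehat{U^{-1}}W\in\Z^{1\times}=\ker(\ad)$, and cites Theorem 4.8 of \cite{OnSomeLie}, which characterizes $\P$ precisely by the condition $\widehat{T^{-1}}T\in\ker(\ad)$; the $\Q^{\overline{2}}$ case is the same computation with $\Z^{2\times}$ and Theorem 4.7 of \cite{OnSomeLie}. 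You instead stay at the level of stabilizers: $\Q^{\overline{k}}=\Gamma^{\overline{k}}$ by Theorem \ref{maintheo_q}, the chain (\ref{dop_1}) gives $\Gamma^{\overline{1}}\subseteq\Gamma^{\overline{3}}$ and $\Gamma^{\overline{2}}\subseteq\Gamma^{\overline{0}}$, the parity subspaces split as $\C^{(1)}=\C^{\overline{1}}\oplus\C^{\overline{3}}$ and $\C^{(0)}=\C^{\overline{0}}\oplus\C^{\overline{2}}$, and the identifications $\P=\Gamma^{(1)}$, $\P^{\Lambda}=\Gamma^{(0)}$ of (\ref{prev_gr}) finish the job. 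Both arguments lean on \cite{OnSomeLie}, just on different statements: the paper on the norm-function characterizations of $\P$ and $\P^{\Lambda}$, you on the group identifications in (\ref{prev_gr}). Your version buys structural transparency: it makes explicit that the lemma amounts to saying that preserving both odd (resp.\ even) quaternion types forces preservation of the odd (resp.\ even) subspace, which is essentially the content the paper only extracts afterwards in Remark \ref{remark_QG} by combining this lemma with (\ref{prev_gr}). The paper's version is more economical in prerequisites, needing only the defining norm conditions of the $\Q$-groups and not Theorem \ref{maintheo_q}. One small wording caveat: preserving each summand of a direct sum implies preserving the sum (which is all you actually use), but the two are not ``equivalent'' as your prose claims --- an automorphism can preserve $\C^{\overline{1}}\oplus\C^{\overline{3}}$ while mixing the two summands.
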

\begin{proof}
 Suppose $T\in\Q^{\overline{1}}$; then $\widetilde{T}T=W\in\Z^{1\times}$ and $\widehat{\widetilde{T}}T=U\in\Z^{1\times}$. Hence, $T=\widetilde{T^{-1}}W$ and $\widehat{T^{-1}}=\widehat{U^{-1}}\widetilde{T}$. We obtain $\widehat{T^{-1}}T=(\widehat{U^{-1}}\widetilde{T})(\widetilde{T^{-1}}W)=\widehat{U^{-1}}W\in\Z^{1\times}=\ker(\ad)$. Therefore, $T\in\P$ by Theorem 4.8 \cite{OnSomeLie}.

   Suppose $T\in\Q^{\overline{2}}$; then $\widetilde{T}T=W\in\Z^{2\times}$ and $\widehat{\widetilde{T}}T=U\in\Z^{2\times}$. Thus, $\widehat{T^{-1}}T=\widehat{U^{-1}}W\in\Z^{2\times}=(\Lambda_r\oplus\C^{n})^{\times}$, so $T\in\P^{\Lambda}$ by Theorem 4.7~\cite{OnSomeLie}.
\end{proof}

\begin{rem}\label{remark_QG}
Using Lemma \ref{lemma_QABP}, Remark \ref{rem_Q_AB}, Theorem \ref{maintheo_q}, Theorem 6.1
\cite{OnSomeLie}, and Theorem 4.1 \cite{AB_aaca}, we obtain
\begin{eqnarray*}
    \!\!\!\!\!\!&\Gamma^{\overline{1}}=(\Gamma^{\overline{01}}\cap\Gamma^{\overline{12}})\subseteq\Gamma^{(1)},
    \qquad \Gamma^{\overline{1}}\subseteq \Gamma^{\overline{2}}\subseteq\Gamma^{(0)}=\tilde{\Gamma}^{(0)},
    \\
    \!\!\!\!\!\! &(\Gamma^{\overline{01}}\cap\Gamma^{\overline{03}}\subseteq\Gamma^{\overline{0}},\qquad(\Gamma^{\overline{23}}\cap\Gamma^{\overline{12}})\subseteq\Gamma^{\overline{2}},\qquad(\Gamma^{\overline{23}}\cap\Gamma^{\overline{03}})\subseteq\Gamma^{\overline{3}},
\end{eqnarray*}
where $\Gamma^{(1)}$ is the group preserving the odd subspace $\C^{(1)}$ under $\ad$ and  $\Gamma^{(0)}=\tilde{\Gamma}^{(0)}$ is the group preserving  $\C^{(0)}$ under $\ad$ and $\tilde{\ad}$ (see (\ref{prev_gr})--(\ref{prev_gr_2}) and the notes above; these groups are considered in \cite{OnSomeLie}); $\Gamma^{\overline{01}}$, $\Gamma^{\overline{12}}$, $\Gamma^{\overline{23}}$, and $\Gamma^{\overline{03}}$ are the groups preserving the subspaces $\C^{\overline{12}}$, $\C^{\overline{23}}$, and $\C^{\overline{03}}$ respectively under $\ad$ (see \eqref{prev_gr_3} and the notes above; these groups are considered in \cite{AB_aaca}). 
\end{rem}

\subsection{The Groups $\check{\Q}^{\overline{0}}$, $\check{\Q}^{\overline{1}}$, $\check{\Q}^{\overline{2}}$, $\check{\Q}^{\overline{3}}$, $\check{\Gamma}^{\overline{0}}$, $\check{\Gamma}^{\overline{1}}$, $\check{\Gamma}^{\overline{2}}$, and $\check{\Gamma}^{\overline{3}}$}\label{section_chQ}

Consider the groups $\check{\Q}^{\overline{1}}$, $\check{\Q}^{\overline{2}}$, $\check{\Q}^{\overline{3}}$, and $\check{\Q}^{\overline{0}}$:
\begin{eqnarray}
    \!\!\!\!\!\!\!\!\!\!\!\!\!\!\!\!\!\!\!\!\!\!\!\!\!\!\!\!\!\!\!\!\!\!\check{\Q}^{\overline{1}}\!\!\!\!\!&:=&\!\!\!\!\!\check{\Q}^{\overline{1}}_{p,q,r}:=\{T\in\C^{\times}:\;\widetilde{T}T\in\check{\Z}^{1\times}=\ker(\tilde{\ad}),\;\widehat{\widetilde{T}}T\in\check{\Z}^{1\times}=\ker(\tilde{\ad})\}\label{def_chQ1}
    \\
    \!\!\!\!\!\!\!\!\!\!\!\!\!&=& \psi^{-1}(\check{\Z}^{1\times})\cap\chi^{-1}(\check{\Z}^{1\times}),
    \\
    \!\!\!\!\!\!\!\!\!\!\!\!\!\!\!\!\!\!\!\!\!\!\!\!\!\!\!\!\!\!\!\!\!\!\check{\Q}^{\overline{2}}\!\!\!\!\!&:=&\!\!\!\!\!\check{\Q}^{\overline{2}}_{p,q,r}:=\{T\in\C^{\times}:\quad\widetilde{T}T\in\check{\Z}^{2\times},\quad \widehat{\widetilde{T}}T\in\check{\Z}^{2\times}\}\label{def_chQ2}
    \\
  \!\!\!\!\!\!\!\!\! \!\!\!\!\!\! \!\!\!\!&=& \psi^{-1}(\check{\Z}^{2\times})\cap\chi^{-1}(\check{\Z}^{2\times}),
    \\
    \!\!\!\!\!\!\!\!\!\!\!\!\!\!\!\!\!\!\!\!\!\!\!\!\!\!\!\!\!\!\!\!\!\!\check{\Q}^{\overline{3}}\!\!\!\!\!&:=&\!\!\!\!\!\check{\Q}^{\overline{3}}_{p,q,r}:=\{T\in\C^{\times}:\quad\widetilde{T}T\in\check{\Z}^{3\times},\quad\widehat{\widetilde{T}}T\in\check{\Z}^{3\times}\}\label{def_chQ3}
    \\
   \!\!\! \!\!\!\!\!\!\!\!\!\!\!\!\!\!\!\!&=& \psi^{-1}(\check{\Z}^{3\times})\cap\chi^{-1}(\check{\Z}^{3\times}),
    \\
    \!\!\!\!\!\!\!\!\!\!\!\!\!\!\!\!\!\!\!\!\!\!\!\!\!\!\!\!\!\!\!\!\!\!\check{\Q}^{\overline{0}}\!\!\!\!\!&:=&\!\!\!\!\!\check{\Q}^{\overline{0}}_{p,q,r}:=\{T\in\C^{\times}:\quad\widetilde{T}T\in\langle\Z^{4}\rangle_{(0)}^{\times},\quad\widehat{\widetilde{T}}T\in\langle\Z^{4}\rangle_{(0)}^{\times}\}\label{def_chQ0}
    \\
   \!\!\! \!\!\!\!\!\!\!\!\!\!\!\!\!\!\!\!&=& \psi^{-1}(\langle\Z^{4}\rangle_{(0)}^{\times})\cap\chi^{-1}(\langle\Z^{4}\rangle_{(0)}^{\times},
\end{eqnarray}
where $\ker(\tilde{\ad})$ (\ref{ker_tilde_ad}) is the kernel of the twisted adjoint representation $\tilde{\ad}$ (\ref{twa22}), the sets $\check{\Z}^2$  and $\check{\Z}^3$ (\ref{def_chZm}) are the twisted centralizers of the subspaces $\C^{2}$ and $\C^{3}$ respectively, and $\Z^4$ is the centralizer of the subspace $\C^{4}$ (see Section \ref{section_centralizers}). By \cite{centr}, we have
\begin{eqnarray*}
    \!\!\!\!\!\!\!\!\!\!\!\!\!\!\!\!\!\!&&\check{\Z}^1=\Lambda,\label{ch_cc_1}
    \\
    \!\!\!\!\!\!\!\!\!\!\!\!\!\!\!\!\!\!&&\check{\Z}^2=
    \left\lbrace
    \begin{array}{lll}\label{ch_cc_2}
    \Lambda^{(0)}\oplus\Lambda^{n}\oplus \{\C^{1}_{p,q}\Lambda^{n-1}\},&&\mbox{$n$ is odd},
    \\
    \Lambda^{(0)}\oplus\Lambda^{n-1}\oplus \{\C^{1}_{p,q}\Lambda^{n-2}\}\oplus\C^{n},&&\mbox{$n$ is even},\;\; r\neq n,
    \\
    \Lambda^{(0)}\oplus\Lambda^{n-1},&&\mbox{$n$ is even},\;\; r=n,
    \end{array}
    \right.
    \\
    \!\!\!\!\!\!\!\!\!\!\!\!\!\!\!\!\!\!&&\check{\Z}^3=\Lambda\oplus \{\C^{1}_{p,q}\Lambda^{\geq n-2}\}\oplus \{\C^{2}_{p,q}\Lambda^{\geq n-3}\},\label{ch_cc_3}
 \end{eqnarray*}
 and
 \begin{eqnarray*}
 \langle\Z^{4}\rangle_{(0)}\!=
\!    \left\lbrace
    \begin{array}{lll}
\!\!\!\Lambda^{(0)}\oplus \{\C^{1}_{p,q}\Lambda^{n-2}\}\oplus \{\C^{2}_{p,q}\Lambda^{n-3}\},\!\!\!\!\!\!\!\!\!&&\mbox{$n$ is odd},
\\
\!\!\!\Lambda^{(0)}\oplus \{\C^{1}_{p,q}\Lambda^{n-3}\}\oplus \{\C^{2}_{p,q}\Lambda^{n-4}\} \oplus \C^{n}, \!\!\!\!\!\!\!\!\!&&\mbox{$n$ is even},\;r\neq n,
\\
\!\!\!\Lambda^{(0)}, \!\!\!\!\!\!\!\!\!&&\mbox{$n$ is even},\; r=n.
    \end{array}
    \right.
\end{eqnarray*}
We use $\check{\;}$ in the notation of the groups (\ref{def_chQ1})--(\ref{def_chQ0}) due to Theorem \ref{maintheo_checkq} below.

In the special case of the non-degenerate geometric algebra $\C_{p,q}$, we obtain the groups $\Q^{\pm}$ and $\Q'$ considered in \cite{GenSpin}:
\begin{eqnarray*}
    \check{\Q}^{\overline{1}}_{p,q}=\check{\Q}^{\overline{3}}_{p,q}=\Q^{\pm},\quad
     \check{\Q}^{\overline{0}}_{p,q}=\check{\Q}^{\overline{2}}_{p,q}=
     \left\lbrace
    \begin{array}{lll}
    \Q^{\pm},&&\!\!\!\!\!\!\!\!\!\!n=1,2,3\mod{4},
    \\
    \Q',&&\!\!\!\!\!\!\!\!\!\!n=0\mod{4},
    \end{array}
    \right.
    \; n\neq1,2,
\end{eqnarray*}
and $\check{\Q}^{\overline{1}}_{p,q}=\check{\Q}^{\overline{0}}_{p,q}=\Q^{\pm}$ if $n=1,2$.

\begin{thm}\label{maintheo_checkq}
In degenerate and non-degenerate geometric algebras $\C$, we have
\begin{eqnarray*}
&\check{\Q}^{\overline{1}}=\check{\Gamma}^{\overline{1}}\subseteq\check{\Q}^{\overline{3}}=\check{\Gamma}^{\overline{3}},\qquad \check{\Q}^{\overline{2}}=\check{\Gamma}^{\overline{2}},\qquad \check{\Q}^{\overline{0}}=\check{\Gamma}^{\overline{0}}.
\end{eqnarray*}
\end{thm}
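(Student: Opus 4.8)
The plan is to prove Theorem \ref{maintheo_checkq} by the same strategy used for Theorem \ref{maintheo_q}, translating the stabilizer conditions defining $\check{\Gamma}^{\overline{k}}$ into norm-function conditions via Lemma \ref{lemma_for_AB}, and then identifying the resulting twisted centralizers using Lemma \ref{centralizers_qt}. For a fixed $k\in\{0,1,2,3\}$, set $m=(k-1)\bmod 4$ and $l=(k+1)\bmod 4$, so that $\C^{\overline{k}}$ is stabilized under $\check{\ad}$ exactly when $\widehat{T}\C^{\overline{k}}T^{-1}$ lands in both adjacent quaternion-type pieces. The key observation is that $\check{\ad}_T(\C^{\overline{k}})=\widehat{T}\C^{\overline{k}}T^{-1}$, and the product of any two of the four quaternion-type subspaces lies in a predictable type, so that $\check{\Gamma}^{\overline{k}}=\{T:\widehat{T}\C^{\overline{k}}T^{-1}\subseteq\C^{\overline{km}}\}\cap\{T:\widehat{T}\C^{\overline{k}}T^{-1}\subseteq\C^{\overline{kl}}\}$. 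The two halves of Lemma \ref{lemma_for_AB} then convert each inclusion into a condition placing either $\psi(T)=\widetilde{T}T$ or $\chi(T)=\widehat{\widetilde{T}}T$ in an appropriate centralizer or twisted centralizer.

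The main bookkeeping step is to track, for each $k$, which of the two displays in Lemma \ref{lemma_for_AB} applies. The cases $(k,l)=(0,1),(1,0),(2,3),(3,2)$ and the cases $(k,l)=(0,3),(3,0),(1,2),(2,1)$ are handled by the two different displays, and since $m$ and $l$ are the two neighbours of $k$, for each fixed $k$ exactly one neighbour falls in each list. Carrying this out, I expect each defining pair of inclusions for $\check{\Gamma}^{\overline{k}}$ to yield precisely one condition $\psi(T)\in\check{\Z}^{\overline{k}\times}$ and one condition $\chi(T)\in\check{\Z}^{\overline{k}\times}$ (for the even-type case $k=0$, these centralizers collapse to the even projection by Lemma \ref{centralizers_qt}). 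Substituting $\check{\Z}^{\overline{m}}=\check{\Z}^{m}$ for $m=1,2,3$ and $\check{\Z}^{\overline{0}}=\langle\Z^{4}\rangle_{(0)}$ from Lemma \ref{centralizers_qt} then matches the defining conditions of $\check{\Q}^{\overline{k}}$ in \eqref{def_chQ1}--\eqref{def_chQ0} exactly, establishing $\check{\Gamma}^{\overline{k}}=\check{\Q}^{\overline{k}}$.

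Finally, I would verify the inclusion $\check{\Q}^{\overline{1}}\subseteq\check{\Q}^{\overline{3}}$ separately, which amounts to showing $\check{\Z}^{1}\subseteq\check{\Z}^{3}$; this follows from the explicit forms recalled after \eqref{def_chQ0}, since $\check{\Z}^{1}=\Lambda$ is a summand of $\check{\Z}^{3}=\Lambda\oplus\{\C^{1}_{p,q}\Lambda^{\geq n-2}\}\oplus\{\C^{2}_{p,q}\Lambda^{\geq n-3}\}$, so any $T$ with both norms in $\check{\Z}^{1\times}$ automatically has them in $\check{\Z}^{3\times}$.

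The hard part will be the careful case analysis confirming that for each $k$ the two inclusions genuinely produce the centralizer matching the definition of $\check{\Q}^{\overline{k}}$, particularly keeping straight which norm function ($\psi$ or $\chi$) is constrained by which inclusion and ensuring the twisted-versus-ordinary centralizer assignment from Lemma \ref{lemma_for_AB} is applied consistently. A secondary subtlety is the $k=0$ case, where the twisted centralizer $\check{\Z}^{\overline{0}}$ is the even part $\langle\Z^{4}\rangle_{(0)}$ rather than the full $\Z^{4}$, so one must confirm that both norm conditions correctly land in $\langle\Z^{4}\rangle_{(0)}^{\times}$ as required by \eqref{def_chQ0}; this is where an off-by-one in the neighbour indices $m,l$ would be most likely to surface, and I would check it explicitly against Lemma \ref{lemma_for_AB}.
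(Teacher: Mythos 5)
Your proposal is correct and follows essentially the same route as the paper's proof: rewriting $\check{\Gamma}^{\overline{k}}$ as the set of $T$ with $\widehat{T}\C^{\overline{k}}T^{-1}\subseteq\C^{\overline{km}}\cap\C^{\overline{kl}}$ for the two neighbours $m,l$ of $k$, converting each inclusion via Lemma \ref{lemma_for_AB} into one $\psi$-condition and one $\chi$-condition in $\check{\Z}^{\overline{k}\times}$, and identifying $\check{\Z}^{\overline{k}}=\check{\Z}^{k}$ ($k=1,2,3$) and $\check{\Z}^{\overline{0}}=\langle\Z^{4}\rangle_{(0)}$ by Lemma \ref{centralizers_qt}. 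Your separate verification of $\check{\Q}^{\overline{1}}\subseteq\check{\Q}^{\overline{3}}$ from the explicit forms $\check{\Z}^{1}=\Lambda\subseteq\check{\Z}^{3}$ matches the paper's argument (which cites the same containment), so there is no substantive difference.
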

\begin{proof}
We get $\check{\Q}^{\overline{1}}\subseteq\check{\Q}^{\overline{3}}$, using $\check{\Z}^1\subseteq\check{\Z}^3$ by Remark \ref{CC3CC4}.
    For fixed $k=0,1,2,3\mod{4}$ and $m=(k-1)\mod{4}$, $l=(k+1)\mod{4}$, we obtain
\begin{eqnarray}
    \!\!\!\!\!\!\!\!\!\!\!\!\!\!\!\check{\Gamma}^{\overline{k}}&=&\{T\in\C^{\times}:\quad \widehat{T}\C^{\overline{k}}T^{-1}\subseteq(\C^{\overline{km}}\cap\C^{\overline{kl}})\}
    \\
    \!\!\!\!\!\!\!\!\!\!\!\!\!\!\!&=&\{T\in\C^{\times}:\quad \widehat{T}\C^{\overline{k}}T^{-1}\subseteq\C^{\overline{km}},\quad \widehat{T}\C^{\overline{k}}T^{-1}\subseteq\C^{\overline{kl}}\}
    \\
    \!\!\!\!\!\!\!\!\!\!\!\!\!\!\!&=&\{T\in\C^{\times}:\quad\widetilde{T}T\in\check{\Z}^{\overline{k}\times},\quad \widehat{\widetilde{T}}T\in\check{\Z}^{\overline{k}\times}\}=\check{\Q}^{\overline{k}},\label{Q_to_pr_2}
\end{eqnarray}
 where we use Lemma \ref{lemma_for_AB} in the first equality (\ref{Q_to_pr_2}) and $\check{\Z}^{\overline{k}}=\check{\Z}^{k}$, $k=1,2,3$, and $\check{\Z}^{\overline{0}}=\check{\Z}^{4}\cap\C^{(0)}={\Z}^{4}\cap\C^{(0)}$ by Lemma \ref{centralizers_qt} and (\ref{same_even}) in the second equality (\ref{Q_to_pr_2}).
\end{proof}

\begin{rem}\label{rem_lg}
In the case of the small dimensions $n\leq 4$, the Lipschitz groups (\ref{def_lg}) coincide with the groups $\check{\Q}^{\overline{1}}$ due to $\C^{\overline{1}}=\C^{1}$:
\begin{eqnarray}
\Gamma^{\pm}=\check{\Q}^{\overline{1}},\qquad n\leq 4.
\end{eqnarray}
That is why the groups $\check{\Q}^{\overline{k}}$ can be considered as generalizations of the Lipschitz groups.
\end{rem}

\begin{rem}
    The groups $\check{\Q}^{\overline{1}}$ \eqref{def_chQ1}, $\check{\Q}^{\overline{2}}$  \eqref{def_chQ2}, $\check{\Q}^{\overline{3}}$ \eqref{def_chQ3}, and $\check{\Q}^{\overline{0}}$ \eqref{def_chQ0} are closely related to the groups $\check{\A}^{\overline{12}}$ \eqref{def_chA12}, $\check{\A}^{\overline{03}}$ \eqref{def_chA03}, $\check{\B}^{\overline{01}}$ \eqref{def_chB01}, and $\check{\B}^{\overline{23}}$ \eqref{def_chB23} considered in detail in \cite{AB_aaca} (see Section \ref{section_relworks}):
    \begin{eqnarray}
        &(\check{\A}^{\overline{12}}\cap\check{\B}^{\overline{01}})\subseteq \check{\Q}^{\overline{1}},\qquad (\check{\A}^{\overline{12}}\cap\check{\B}^{\overline{23}})\subseteq \check{\Q}^{\overline{2}},
        \\
        & (\check{\A}^{\overline{03}}\cap\check{\B}^{\overline{23}})\subseteq \check{\Q}^{\overline{3}},\qquad (\check{\A}^{\overline{03}}\cap\check{\B}^{\overline{01}})\subseteq \check{\Q}^{\overline{0}},
    \end{eqnarray}
    where for $\check{\Q}^{\overline{3}}$, we use that $\Z^3\cap\C^{(0)} = \langle \Z^3\rangle_{(0)} = \langle \check{\Z}^3\rangle_{(0)}\subseteq \check{\Z}^3$, and for  $\check{\Q}^{\overline{0}}$, we use that $\langle \Z^3\rangle_{(0)}\subseteq \langle \Z^4\rangle_{(0)}$ and $\langle \Z^1\rangle_{(0)}\subseteq \langle \Z^4\rangle_{(0)}$ (see Remark \ref{CC3CC4} and  formula \eqref{same_even}).
\end{rem}

\begin{lem}\label{lemma_chQ_eq}
We have the following inclusions:
\begin{eqnarray}
\!\!\!\!\!\!\!\!\!\!\!\!&\check{\Q}^{\overline{1}}\subseteq\P^{\pm\Lambda},\quad \check{\Q}^{\overline{m}}\subseteq\Q^{\overline{0}},\quad m=0,1,2,3,\quad \check{\Q}^{\overline{1}}\subseteq\Q^{\overline{2}},\label{sub_ch_Q}
\end{eqnarray}
where $\P^{\pm\Lambda}$ is the group defined in formula \eqref{prev_gr_0_0} and considered in \cite{OnSomeLie}.
\end{lem}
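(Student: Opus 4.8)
The plan is to prove each inclusion in \eqref{sub_ch_Q} by unwinding the defining conditions on the norm functions $\psi$ and $\chi$ and comparing the relevant centralizers, exactly in the spirit of Lemma \ref{lemma_QABP}. For the inclusions of the form $\check{\Q}^{\overline{m}}\subseteq\Q^{\overline{k}}$, I would exploit the containment relations among the twisted centralizers $\check{\Z}^m$ and the centralizers $\Z^k$ recorded in Remark \ref{CC3CC4}, Lemma \ref{centralizers_qt}, and \eqref{same_even}. For the inclusion into $\P^{\pm\Lambda}$, I would follow the computation of Lemma \ref{lemma_QABP}, producing $\widehat{T^{-1}}T$ explicitly and placing it in the appropriate set so that the characterization of $\P^{\pm\Lambda}$ from \cite{OnSomeLie} applies.

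First I would establish $\check{\Q}^{\overline{m}}\subseteq\Q^{\overline{0}}$ for all $m=0,1,2,3$. By the definitions \eqref{def_chQ1}--\eqref{def_chQ0} and \eqref{def_Q0}, it suffices to show that each of the sets $\check{\Z}^{1}$, $\check{\Z}^{2}$, $\check{\Z}^{3}$, and $\langle\Z^{4}\rangle_{(0)}$ defining $\check{\Q}^{\overline{m}}$ is contained in $\Z^{4}$, the set defining $\Q^{\overline{0}}$; then $\psi(T),\chi(T)$ lying in the former forces them into the latter. The inclusion $\check{\Z}^{m}\subseteq\Z^{4}$ for $m=1,2,3$ is precisely \eqref{CC3CC4_3} in Remark \ref{CC3CC4}, and $\langle\Z^{4}\rangle_{(0)}\subseteq\Z^{4}$ is immediate since it is a projection of $\Z^{4}$ onto a summand. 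Taking invertible elements of both sides and intersecting the $\psi$- and $\chi$-preimages yields $\check{\Q}^{\overline{m}}\subseteq\Q^{\overline{0}}$.

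Next I would treat $\check{\Q}^{\overline{1}}\subseteq\Q^{\overline{2}}$. Comparing \eqref{def_chQ1} with \eqref{def_Q2}, I must show $\check{\Z}^{1}\subseteq\Z^{2}$. Using the explicit forms $\check{\Z}^{1}=\Lambda$ and $\Z^{2}=\Lambda\oplus\C^{n}$ (for $r\neq n$) or $\Z^{2}=\Lambda$ (for $r=n$) recorded just after \eqref{def_chQ0} and in Section \ref{section_Q}, the containment $\Lambda\subseteq\Z^{2}$ holds in every case. Hence $\psi(T),\chi(T)\in\check{\Z}^{1\times}$ implies $\psi(T),\chi(T)\in\Z^{2\times}$, giving $\check{\Q}^{\overline{1}}\subseteq\Q^{\overline{2}}$. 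Finally, for $\check{\Q}^{\overline{1}}\subseteq\P^{\pm\Lambda}$, I would take $T\in\check{\Q}^{\overline{1}}$, write $\widetilde{T}T=W\in\check{\Z}^{1\times}=\Lambda^{\times}$ and $\widehat{\widetilde{T}}T=U\in\Lambda^{\times}$, and compute $\widehat{T^{-1}}T=\widehat{U^{-1}}W$ as in Lemma \ref{lemma_QABP}; since $W,U\in\Lambda^{\times}$ and $\Lambda$ is closed under grade involution and multiplication, $\widehat{T^{-1}}T\in\Lambda^{\times}$, and I would invoke the characterization of $\P^{\pm\Lambda}=\check{\Gamma}^{(1)}$ via $\check{\ad}$ from \cite{OnSomeLie} to conclude $T\in\P^{\pm\Lambda}$. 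The main obstacle is ensuring the $\P^{\pm\Lambda}$ step is handled correctly: unlike the $\ad$-based argument of Lemma \ref{lemma_QABP}, here the relevant condition is on $\check{\ad}_T$, so I must verify that $\widehat{T^{-1}}T\in\Lambda^{\times}$ is exactly the membership criterion for $\P^{\pm\Lambda}$ established in \cite{OnSomeLie}, rather than accidentally reproducing the criterion for $\P$.
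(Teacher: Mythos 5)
Your proposal is correct and follows essentially the same route as the paper: the inclusions $\check{\Q}^{\overline{m}}\subseteq\Q^{\overline{0}}$ and $\check{\Q}^{\overline{1}}\subseteq\Q^{\overline{2}}$ via the centralizer containments $\check{\Z}^{m}\subseteq\Z^{4}$, $\langle\Z^{4}\rangle_{(0)}\subseteq\Z^{4}$, and $\check{\Z}^{1}\subseteq\Z^{2}$ (the paper merely states these follow from the definitions and Remark \ref{CC3CC4}, which you spell out), and the inclusion $\check{\Q}^{\overline{1}}\subseteq\P^{\pm\Lambda}$ via the identity $\widehat{T^{-1}}T=\widehat{U^{-1}}W\in\ker(\tilde{\ad})=\Lambda^{\times}$ combined with Theorem 4.7 of the cited work, exactly as in the paper. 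Your closing concern is also resolved the same way the paper resolves it: the criterion $\widehat{T^{-1}}T\in\ker(\tilde{\ad})$ is indeed the one characterizing $\P^{\pm\Lambda}$ (not $\P$, whose criterion uses $\Z^{1\times}$).
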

\begin{proof}
    Let us prove $\check{\Q}^{\overline{1}}\subseteq\P^{\pm\Lambda}$. Suppose $T\in\check{\Q}^{\overline{1}}$; then  $\widetilde{T}T=W\in\ker(\tilde{\ad})$ and $\widehat{\widetilde{T}}T=U\in\ker(\tilde{\ad})$. Therefore, $T=\widetilde{T^{-1}}W$ and $\widehat{T^{-1}}=\widehat{U^{-1}}\widetilde{T}$. Thus, $\widehat{T^{-1}}T=(\widehat{U^{-1}}\widetilde{T})(\widetilde{T^{-1}}W)=\widehat{U^{-1}}W\in\ker(\tilde{\ad})$ and  $T\in\P^{\pm\Lambda}$ by Theorem 4.7 \cite{OnSomeLie}. 
    
    Other inclusions in (\ref{sub_ch_Q}) follow from the definitions of the groups $\check{\Q}^{\overline{m}}$ (\ref{def_chQ1})--(\ref{def_chQ0}), $m=0,1,2,3$, $\Q^{\overline{0}}$ (\ref{def_Q0}), $\Q^{\overline{2}}$ (\ref{def_Q2}), $\check{\Z}^1\subseteq\Z^2$, and Remark~\ref{CC3CC4}. 
\end{proof}

\begin{rem}
    Using Lemma \ref{lemma_chQ_eq}, Theorem \ref{maintheo_checkq}, Remark \ref{remark_QG}, Theorem 6.1 \cite{OnSomeLie}, and Theorem 4.2 \cite{AB_aaca}, we obtain
    \begin{eqnarray*}
        \!\!\!\!&(\check{\Gamma}^{\overline{12}}\cap\check{\Gamma}^{\overline{01}})\subseteq\check{\Gamma}^{\overline{1}}\subseteq\check{\Gamma}^{(1)}=\tilde{\Gamma}^{(1)},\qquad \check{\Gamma}^{\overline{1}}\subseteq\Gamma^{\overline{2}}\subseteq\Gamma^{(0)}=\tilde{\Gamma}^{(0)},
        \\
        \!\!\!\!&
        \check{\Gamma}^{\overline{m}}\subseteq\Gamma^{\overline{0}},\qquad m=0,1,2,3;
        \qquad(\check{\Gamma}^{\overline{12}}\cap\check{\Gamma}^{\overline{23}})\subseteq\check{\Gamma}^{\overline{2}},
    \end{eqnarray*}
    where $\check{\Gamma}^{(1)}=\tilde{\Gamma}^{(1)}$ is the group preserving $\C^{(1)}$ under $\check{\ad}$ and $\tilde{\ad}$ and $\Gamma^{(0)}=\tilde{\Gamma}^{(0)}$ is the group preserving $\C^{(0)}$ under $\ad$ and $\tilde{\ad}$ (see (\ref{prev_gr})--(\ref{prev_gr_2}) and the notes above; these groups are considered in \cite{OnSomeLie}); $\check{\Gamma}^{\overline{12}}$, $\check{\Gamma}^{\overline{01}}$, and $\check{\Gamma}^{\overline{23}}$ are the groups preserving the subspaces $\C^{\overline{12}}$, $\C^{\overline{01}}$, and $\C^{\overline{23}}$ respectively under $\check{\ad}$ (see \eqref{prev_gr_4} and the notes above; these groups are considered in \cite{AB_aaca}). 
\end{rem}

\section{Examples on the Considered Groups}\label{section_examples}

\begin{rem}
    In the case of the Grassmann algebra $\C_{0,0,n}$, we have
    \begin{eqnarray*}
\Lambda^{\times}_{n}=\Q^{\overline{2}}_{0,0,n}=\Q^{\overline{0}}_{0,0,n}=\check{\Q}^{\overline{1}}_{0,0,n}=\check{\Q}^{\overline{3}}_{0,0,n}
    \end{eqnarray*}
and
    \begin{eqnarray}
        \!\!\!\!\!\!\!\!\!\!\Q^{\overline{1}}_{0,0,n}\!\!\!\!&=&\!\!\!\!\{T\in\Lambda^{\times}_n:\;\; \widetilde{T}T\in\ker(\ad),\;\; \widehat{\widetilde{T}}T\in\ker(\ad)\},
        \\
        \!\!\!\!\!\!\!\!\!\!\check{\Q}^{\overline{0}}_{0,0,n}\!\!\!\!&=&
        \!\!\!\!\{T\in\Lambda^{\times}_n:\;\;\widetilde{T}T\in\ker(\check{\ad}),\;\;\widehat{\widetilde{T}}T\in\ker(\check{\ad})\}.
        \end{eqnarray}
        If $n$ is odd, then
        \begin{eqnarray}
        \!\!\!\!\!\!\!\!\!\!\Q^{\overline{3}}_{0,0,n}\!\!\!\!&=&
        \!\!\!\!\{T\in\Lambda^{\times}_n:\;\; \widetilde{T}T\in(\Lambda^{(0)n}_n\oplus\Lambda^{n-2}_n)^{\times},\;\; \widehat{\widetilde{T}}T\in(\Lambda^{(0)n}_n\oplus\Lambda^{n-2}_n)^{\times}\},
        \\
        \!\!\!\!\!\!\!\!\!\!\check{\Q}^{\overline{2}}_{0,0,n}\!\!\!\!&=&\!\!\!\!\{T\in\Lambda^{\times}_n:\;\; \widetilde{T}T\in\Lambda^{(0)n\times}_n,\;\;\widehat{\widetilde{T}}T\in\Lambda^{(0)n\times}_n\}.
    \end{eqnarray}
           If $n$ is even, then
        \begin{eqnarray*}
        \Q^{\overline{3}}_{0,0,n}=\check{\Q}^{\overline{2}}_{0,0,n}=
        \{T\in\Lambda^{\times}_n:\; \widetilde{T}T\in(\Lambda^{(0)}_n\oplus\Lambda^{ n-1}_n)^{\times},\; \widehat{\widetilde{T}}T\in(\Lambda^{(0)}_n\oplus\Lambda^{n-1}_n)^{\times}\},
    \end{eqnarray*}
and
    \begin{eqnarray}
        & \Q^{\overline{1}}_{0,0,n}=\check{\Q}^{\overline{0}}_{0,0,n}.
    \end{eqnarray}
\end{rem}

\begin{rem}\label{small_dim}
Below, we write out the groups $\Q^{\overline{k}}$ and $\check{\Q}^{\overline{k}}$, $k=0,1,2,3$, considered in this paper as well as the groups $\P^{\pm}$, $\P$, $\P^{\pm\Lambda}$, $\P^{\Lambda}$ (\ref{prev_gr})--(\ref{prev_gr_2}) \cite{OnSomeLie} and 
$\A^{\overline{01}}$, $\A^{\overline{23}}$, $\check{\A}^{\overline{12}}$, $\check{\A}^{\overline{03}}$, $\B^{\overline{12}}$, $\B^{\overline{03}}$, $\check{\B}^{\overline{01}}$, $\check{\B}^{\overline{23}}$ \eqref{prev_gr_3}--\eqref{prev_gr_4} \cite{AB_aaca} in the case of low dimensions. These groups  are considered in \cite{OnInner,GenSpin} in the case of the low-dimensional ($n\leq6$) non-degenerate algebra $\C_{p,q}$.

    If $n=1$, then we have two different groups in the case of the non-degenerate algebra $\C_{p,q}$:
    \begin{eqnarray*}
        \C^{\times}_{p,q}&=&\P_{p,q}=\P^{\Lambda}_{p,q}=\Q^{\overline{0}}_{p,q}=\Q^{\overline{1}}_{p,q}=\Q^{\overline{2}}_{p,q}=\Q^{\overline{3}}_{p,q}
        \\
        &=&\check{\Q}^{\overline{2}}_{p,q}=\check{\Q}^{\overline{3}}_{p,q} = \A^{\overline{01}}_{p,q} = \A^{\overline{23}}_{p,q} = \B^{\overline{12}}_{p,q} = \B^{\overline{03}}_{p,q}
        \\
        &=&\check{\B}^{\overline{01}}_{p,q} = \check{\B}^{\overline{23}}_{p,q};
        \\
        \C^{(0)\times}_{p,q}\cup\C^{(1)\times}_{p,q}&=&\P^{\pm}_{p,q}=\P^{\pm\Lambda}_{p,q}=\check{\Q}^{\overline{0}}_{p,q}=\check{\Q}^{\overline{1}}_{p,q} = \check{\A}^{\overline{12}}_{p,q} = \check{\A}^{\overline{03}}_{p,q};
    \end{eqnarray*}
    as well as in the case of the Grassmann algebra $\C_{0,0,1}$:
        \begin{eqnarray*}
        \C^{\times}_{0,0,1}&=&\P_{0,0,1}=\P^{\pm\Lambda}_{0,0,1}=\P^{\Lambda}_{0,0,1}=\Q^{\overline{0}}_{0,0,1}=\Q^{\overline{1}}_{0,0,1}=\Q^{\overline{2}}_{0,0,1}
        \\
        &=&\Q^{\overline{3}}_{0,0,1}=\check{\Q}^{\overline{2}}_{0,0,1}=\check{\Q}^{\overline{3}}_{0,0,1}=\check{\Q}^{\overline{1}}_{0,0,1}=\B^{\overline{12}}_{0,0,1} =  \B^{\overline{03}}_{0,0,1}
        \\
        &=&  \check{\B}^{\overline{01}}_{0,0,1}=\check{\B}^{\overline{23}}_{0,0,1} =\A^{\overline{01}}_{0,0,1}=\A^{\overline{23}}_{0,0,1}=\check{\A}^{\overline{12}}_{0,0,1};
        \\
        \C^{0\times}_{0,0,1}&=&\P^{\pm}_{0,0,1}=\check{\Q}^{\overline{0}}_{0,0,1}=\check{\A}^{\overline{03}}_{0,0,1}.
    \end{eqnarray*}
    If $n=2$, then we have two different groups in the case of the non-degenerate algebra $\C_{p,q}$:
    \begin{eqnarray*}
    \C^{\times}_{p,q}&=&\Q^{\overline{3}}_{p,q}=\Q^{\overline{0}}_{p,q}=\check{\Q}^{\overline{2}}_{p,q}=\check{\Q}^{\overline{3}}_{p,q} 
    \\
    &=& \B^{\overline{12}}_{p,q} = \B^{\overline{03}}_{p,q}=\check{\B}^{\overline{01}}_{p,q}=\check{\B}^{\overline{23}}_{p,q};
    \\
    \C^{(0)\times}_{p,q}\cup\C^{(1)\times}_{p,q}&=&\P^{\pm}_{p,q}=\P_{p,q}=\P^{\pm\Lambda}_{p,q}=\P^{\Lambda}_{p,q}=\Q^{\overline{1}}_{p,q}=\check{\Q}^{\overline{0}}_{p,q}
    \\
    &=&\Q^{\overline{2}}_{p,q}=\check{\Q}^{\overline{1}}_{p,q} = \A^{\overline{01}}_{p,q} = \A^{\overline{23}}_{p,q}= \check{\A}^{\overline{12}}_{p,q} = \check{\A}^{\overline{03}}_{p,q};
    \end{eqnarray*}
    and the Grassmann algebra $\C_{0,0,2}$:
    \begin{eqnarray*}
    \C^{\times}_{0,0,2}&=&\P^{\pm\Lambda}_{0,0,2}=\P^{\Lambda}_{0,0,2}=\Q^{\overline{0}}_{0,0,2}=\Q^{\overline{2}}_{0,0,2}=\Q^{\overline{3}}_{0,0,2}=\check{\Q}^{\overline{1}}_{0,0,2}
    \\
    &=&\check{\Q}^{\overline{2}}_{0,0,2}=\check{\Q}^{\overline{3}}_{0,0,2}=\B^{\overline{12}}_{0,0,2} =  \B^{\overline{03}}_{0,0,2} =\check{\B}^{\overline{01}}_{0,0,2}=\check{\B}^{\overline{23}}_{0,0,2}
    \\
    &=&  \check{\A}^{\overline{12}}_{0,0,2}=\A^{\overline{23}}_{0,0,2};
    \\
    \C^{(0)\times}_{0,0,2}&=&\P^{\pm}_{0,0,2}=\P_{0,0,2}=\Q^{\overline{1}}_{0,0,2}=\check{\Q}^{\overline{0}}_{0,0,2}=\A^{\overline{01}}_{0,0,2}=\check{\A}^{\overline{03}}_{0,0,2};
    \end{eqnarray*}
    and three different groups in the case of $\C_{p,q,1}$ (with $(p,q)=(1,0)$ or $(0,1)$): 
    \begin{eqnarray*}
\C^{\times}_{p,q,1}&=&\Q^{\overline{3}}_{p,q,1}=\Q^{\overline{0}}_{p,q,1}=\check{\Q}^{\overline{2}}_{p,q,1}=\check{\Q}^{\overline{3}}_{p,q,1}
\\
&=&\B^{\overline{12}}_{p,q,1}=\B^{\overline{03}}_{p,q,1}=\check{\B}^{\overline{01}}_{p,q,1}=\check{\B}^{\overline{23}}_{p,q,1};
        \\
(\C^{(0)\times}_{p,q,1}\cup\C^{(1)\times}_{p,q,1})\Lambda^{\times}_1&=&\P^{\pm\Lambda}_{p,q,1}=\P^{\Lambda}_{p,q,1}=\Q^{\overline{2}}_{p,q,1}=\check{\Q}^{\overline{1}}_{p,q,1} = \A^{\overline{23}}_{p,q,1}=\check{\A}^{\overline{12}}_{p,q,1};
        \\
\C^{(0)\times}_{p,q,1}\cup\C^{(1)\times}_{p,q,1}&=&\P^{\pm}_{p,q,1}=\P_{p,q,1}=\Q^{\overline{1}}_{p,q,1}=\check{\Q}^{\overline{0}}_{p,q,1}=\A^{\overline{01}}_{p,q,1}=\check{\A}^{\overline{03}}_{p,q,1}.
    \end{eqnarray*}
\end{rem}

Let us provide some examples on the connections between the groups considered in Section \ref{section_qt} and the well-known matrix groups. We use that any degenerate geometric algebra $\C_{p,q,r}$, $r\neq0$, can be embedded into the non-degenerate algebra of larger dimension (see Clifford--Jordan--Wigner representation in, for example, \cite{CJW2}). Any non-degenerate geometric algebra is isomorphic to some matrix algebra by the Cartan's periodicity theorem (see, for example, \cite{lounesto,p}).
Let us consider the groups of upper triangular matrices $\UT(2,\F)$ and $\UT(4,\F)$ \cite{baker}:
\begin{eqnarray}
\UT(2,\F)&:=&\left\{\begin{bmatrix}
x_{11} & x_{12} \\
0 & x_{21} \\
\end{bmatrix}\in\GL(2,\F)\right\},\label{ut_2}
\\
\UT(4,\F)&:=&\left\{\begin{bmatrix}
x_{11} & x_{12} & x_{13} & x_{14}\\
0 & x_{22} & x_{23} & x_{24} \\
0 & 0 & x_{33} & x_{34} \\
0 & 0 & 0 & x_{44}\\
\end{bmatrix}\in\GL(4,\F)\right\},
\label{ut_4}
\end{eqnarray}
and a unipotent subgroup $\SUT(2,\F)$ \cite{baker} of $\UT(2,\F)$:
\begin{eqnarray}
\SUT(2,\F):=\left\{\begin{bmatrix}
1 & x_{12} \\
0 & 1 \\
\end{bmatrix},\; x_{12}\in\F\right\}.\label{sut}
\end{eqnarray}
Also consider the higher-dimensional Heisenberg group $\Heis_4$ (see, for example, \cite{baker}):
\begin{eqnarray}\label{heis}
\Heis_4=
\left\{\begin{bmatrix}
1 & x_{12} & x_{13} & x_{14}\\
0 & 1 & 0 & x_{24} \\
0 & 0 & 1 & x_{34} \\
0 & 0 & 0 & 1\\
\end{bmatrix}\in\GL(4,\F)\right\}.
\end{eqnarray}

\begin{ex}\label{ex_mat_1}
Consider the Grassmann algebra $\Lambda_{1}=\C_{0,0,1}$, which can be embedded into the non-degenerate algebra $\C_{1,1,0}\cong\Mat(2,\F)$. For $e$ and $e_1$ from $\Lambda_{1}$, we have:
\begin{eqnarray}
e\mapsto \begin{bmatrix} 
1 & 0 \\ 
0 & 1 \\
\end{bmatrix}, \quad 
e_1\mapsto \begin{bmatrix} 
0 & 1 \\ 
0 & 0 \\
\end{bmatrix}.
\end{eqnarray}
Also consider the Grassmann algebra $\Lambda_{2}=\C_{0,0,2}$, which can be embedded into $\C_{2,2,0}\cong\Mat(4,\F)$. For the basis elements $e_1$, $e_2$, and $e_{12}$ of $\Lambda_{2}$, we have:
\begin{eqnarray*}
e_1\mapsto \begin{bmatrix} 
0 & 1 & 0 & 0 \\ 
0 & 0 & 0 & 0 \\
0 & 0 & 0 & 1 \\
0 & 0 & 0 & 0 \\
\end{bmatrix},\quad 
e_2\mapsto \begin{bmatrix} 
0 & 0 & 1 & 0 \\ 
0 & 0 & 0 & -1 \\
0 & 0 & 0 & 0 \\
0 & 0 & 0 & 0 \\
\end{bmatrix},\quad
e_{12}\mapsto \begin{bmatrix} 
0 & 0 & 0 & -1 \\ 
0 & 0 & 0 & 0 \\
0 & 0 & 0 & 0 \\
0 & 0 & 0 & 0 \\
\end{bmatrix}.
\end{eqnarray*}
Note that
\begin{eqnarray*}
    \Lambda^{(0)}_2\cong 
\left\{\begin{bmatrix} 
x_0 & 0 & 0 & x_1 \\ 
0 & x_0 & 0 & 0 \\
0 & 0 & x_0 & 0 \\
0 & 0 & 0 & x_0 \\
\end{bmatrix},\;\; x_0,x_1\in\F\right\}\cong
\left\{\begin{bmatrix} 
x_0 & x_1 \\ 
0 & x_0 \\
\end{bmatrix},\;\; x_0,x_1\in\F\right\}.
\end{eqnarray*}
Using Remark \ref{small_dim}, we obtain $\Lambda^{(0)\times}_1=\P^{\pm}_{0,0,1}=\check{\Q}^{\overline{0}}_{0,0,1}\cong\F^{\times}$,
    \begin{eqnarray}\label{l1l02}
        \!\!\!\!\!\!\!\!\!\!\!\!\!\!\!\!\!\!&&\Lambda^{\times}_1=\P_{0,0,1}=\Q^{\overline{0}}_{0,0,1}=\Q^{\overline{1}}_{0,0,1}=\check{\Q}^{\overline{1}}_{0,0,1}\cong\Lambda^{(0)\times}_2=\P^{\pm}_{0,0,2}=\P_{0,0,2}\label{gr_1}
        \\
        \!\!\!\!\!\!\!\!\!\!\!\!\!\!\!\!\!\!&&=\Q^{\overline{1}}_{0,0,2}=\check{\Q}^{\overline{0}}_{0,0,2}\cong 
        \left\{
        \begin{bmatrix} x_0 & x_1 \\ 
        0 & x_0 \\
\end{bmatrix},\; x_0,x_1\in\F,\; x_0\neq0
        \right\},\label{gr_2}
    \end{eqnarray}
and
\begin{eqnarray}
&&\Lambda^{\times}_2=\P^{\pm\Lambda}_{0,0,2}=\Q^{\overline{0}}_{0,0,2}=\Q^{\overline{2}}_{0,0,2}=\check{\Q}^{\overline{1}}_{0,0,2}=\check{\Q}^{\overline{2}}_{0,0,2}
\\
&&
    \cong\left\{
\begin{bmatrix} 
x_0 & x_1 & x_2 & x_3 \\
0 & x_0 & 0 & - x_2 \\
0 & 0 & x_0 & x_1 \\
0 & 0& 0 & x_0 \\
\end{bmatrix}:\;\; x_0,x_1,x_2,x_3\in\F,\;\; x_0\neq0
\right\}.\label{gr_3}
\end{eqnarray}
Note that in the cases $\Lambda_1$ and $\Lambda_2$, all the groups considered in this work  can be realized as subgroups of the upper triangular matrix groups $\UT(2,\F)$ (\ref{ut_2}) and $\UT(4,\F)$ (\ref{ut_4}).
The unitriangular group $\SUT(2,\F)$ (\ref{sut}) can be realized as a subgroup of the groups (\ref{gr_1})--(\ref{gr_2}).
The unipotent subgroup of (\ref{gr_3}), i.e. the subgroup of (\ref{gr_3}) with $x_0=1$, is a subgroup of the Heisenberg group $\Heis_4$ (\ref{heis}).

\end{ex}

\begin{ex}
    Consider the real degenerate algebra $\C_{0,1,1}(\BR)$, which can be embedded into the real non-degenerate algebra $\C_{1,2,0}(\BR)\cong\Mat(2,\BC)$. For the basis elements $\C_{0,1,1}(\BR)$, we have the following complex matrices:
    \begin{eqnarray}
e\mapsto \begin{bmatrix} 
1 & 0 \\ 
0 & 1 \\
\end{bmatrix}, \quad 
e_1\mapsto \begin{bmatrix} 
i & 0 \\ 
0 & -i \\
\end{bmatrix}, \quad 
e_2\mapsto \begin{bmatrix} 
0 & 1 \\ 
0 & 0 \\
\end{bmatrix}, \quad 
e_{12}\mapsto \begin{bmatrix} 
0 & i \\ 
0 & 0 \\
\end{bmatrix}.
    \end{eqnarray}
Note that (see example above)
\begin{eqnarray}
\C^{(0)\times}_{0,1,1}(\BR)\cong\left \{
    \begin{bmatrix} 
x_0 & x_{1} \\ 
0 & x_0 \\
\end{bmatrix},\; x_0,x_1\in\BR,\; x_0\neq0\right\}\cong\Lambda^{\times}_1(\BR)\cong\Lambda^{(0)\times}_2(\BR).
\end{eqnarray}
We obtain the following isomorphisms:
\begin{eqnarray}
\!\!\!\!\!\!\!\!\!\!\!\!\!\!&&\C^{\times}_{0,1,1}(\BR)=\Q^{\overline{0}}_{0,1,1}=\Q^{\overline{3}}_{0,1,1}=\check{\Q}^{\overline{2}}_{0,1,1}=\check{\Q}^{\overline{3}}_{0,1,1}\label{gr_4}
\\
\!\!\!\!\!\!\!\!\!\!\!\!\!\!&&\cong
\left\{
    \begin{bmatrix} 
x_0 + ix_1 & x_2 + ix_3 \\
0 & x_0-i x_1 \\
\end{bmatrix},\; x_0,x_1,x_2,x_3\in\BR,\;(x_0)^2+(x_1)^2\neq0\right\},\label{gr_4_2}
\\
\!\!\!\!\!\!\!\!\!\!\!\!\!\!&&\C^{(0)\times}_{0,1,1}(\BR)\cup\C^{(1)\times}_{0,1,1}(\BR)=\Q^{\overline{1}}_{0,1,1}=\check{\Q}^{\overline{0}}_{0,1,1}\label{gr_5}
    \\
\!\!\!\!\!\!\!\!\!\!\!\!\!\!&&\cong
    \left\{
    \begin{bmatrix} 
x_0 & i x_{1} \\ 
0 & x_0 \\
\end{bmatrix},
    \begin{bmatrix} 
i x_0 & x_1 \\ 
0 & -ix_0 \\
\end{bmatrix},\; x_0,x_1\in\BR,\; x_0\neq0\right\},
\\
\!\!\!\!\!\!\!\!\!\!\!\!\!\!&&\C^{12\times}_{0,1,1}\cup(\C^{(0)}_{0,1,1}\oplus\Lambda^1_1)^{\times}=\Q^{\overline{2}}_{0,1,1}=\check{\Q}^{\overline{1}}_{0,1,1}\label{gr_6}
\\
\!\!\!\!\!\!\!\!\!\!\!\!\!\!&&\cong
\left\{
    \begin{bmatrix} 
i x_1 & x_2 + ix_3 \\
0 & -ix_1 \\
\end{bmatrix},
    \begin{bmatrix} 
x_1 & x_2 + ix_3 \\
0 & x_1 \\
\end{bmatrix},\; x_1,x_2,x_3\in\BR,\; x_1\neq0\right\}.\label{gr_6_2}
\end{eqnarray}
Note that all the considered groups (\ref{gr_4}), (\ref{gr_5}), and (\ref{gr_6}) can be realized as subgroups of the upper triangular matrix group $\UT(2,\BC)$ (\ref{ut_2}). The unitriangular group $\SUT(2,\BC)$ is a subgroup of the matrix groups (\ref{gr_4_2}) and (\ref{gr_6_2}).

\end{ex}

\section{The Lie Algebras of the Considered Lie Groups}\label{section_liealg}

\begin{thm}\label{thm_alg}
    The Lie algebras of the Lie groups $\Q^{\overline{k}}$ and $\check{\Q}^{\overline{k}}$, $k=0,1,2,3$, are presented in Table \ref{table_lie_alg}.
\end{thm}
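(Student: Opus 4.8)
The plan is to realize each Lie algebra as the tangent space at $e$, using the identifications $\Q^{\overline{k}}=\Gamma^{\overline{k}}$ and $\check{\Q}^{\overline{k}}=\check{\Gamma}^{\overline{k}}$ from Theorems \ref{maintheo_q} and \ref{maintheo_checkq}. Both $\ad$ and $\check{\ad}$ are representations of $\C^{\times}$ on $\C$: for $\check{\ad}$ this follows from $\widehat{ST}=\widehat{S}\,\widehat{T}$, which gives $\check{\ad}_{S}\check{\ad}_{T}=\check{\ad}_{ST}$. Hence $\Gamma^{\overline{k}}$ and $\check{\Gamma}^{\overline{k}}$ are closed (so, by Cartan's theorem, Lie) subgroups of the Lie group $\C^{\times}$. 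For any linear representation $\rho$ one has $\rho(\exp(tX))=\exp(t\,d\rho_{e}(X))$, and a subspace is preserved by $\exp(t\,d\rho_{e}(X))$ for all $t$ if and only if it is preserved by $d\rho_{e}(X)$; therefore the Lie algebra of a stabilizer equals the stabilizer inside the Lie algebra. Computing $d\ad_{e}(X)(U)=XU-UX$ and $d\check{\ad}_{e}(X)(U)=\widehat{X}U-UX$, I would record the exact descriptions
\[
\mathrm{Lie}(\Q^{\overline{k}})=\{X\in\C:\ XV-VX\in\C^{\overline{k}}\ \ \forall V\in\C^{\overline{k}}\},
\]
\[
\mathrm{Lie}(\check{\Q}^{\overline{k}})=\{X\in\C:\ \widehat{X}V-VX\in\C^{\overline{k}}\ \ \forall V\in\C^{\overline{k}}\}.
\]

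Next I would evaluate these stabilizer subalgebras using the quaternion-type ($\mathbb{Z}_{2}\times\mathbb{Z}_{2}$, color Lie) structure of $\C$ with respect to the commutator and anticommutator. Write $X=X_{\overline{0}}+X_{\overline{1}}+X_{\overline{2}}+X_{\overline{3}}$. For $\ad$ every type-part acts on $\C^{\overline{k}}$ through the commutator $X\mapsto XV-VX$; for $\check{\ad}$, since $\widehat{X_{\overline{i}}}=(-1)^{i}X_{\overline{i}}$, the even-type parts act through the commutator and the odd-type parts through the anticommutator $X\mapsto XV+VX$. The structural input I rely on is that for fixed $k$ each relevant product $[\C^{\overline{i}},\C^{\overline{k}}]$ and $\{\C^{\overline{i}},\C^{\overline{k}}\}$ lies in a single quaternion type, and that the four resulting target types are pairwise distinct. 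A short grade computation then shows that, for every $k$ and under both representations, it is precisely the type-$2$ part that maps $\C^{\overline{k}}$ back into $\C^{\overline{k}}$, whereas the type-$0,1,3$ parts land in complementary types. Hence the stabilizer conditions force $X_{\overline{0}},X_{\overline{1}},X_{\overline{3}}$ into $\Z^{\overline{k}}$ (respectively $\check{\Z}^{\overline{k}}$) and leave $X_{\overline{2}}$ free; this is the Lie-algebra analogue of Lemma \ref{lemma_for_AB}.

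This produces the uniform descriptions $\mathrm{Lie}(\Q^{\overline{k}})=(\Z^{\overline{k}}\cap\C^{\overline{013}})\oplus\C^{\overline{2}}$ and $\mathrm{Lie}(\check{\Q}^{\overline{k}})=(\check{\Z}^{\overline{k}}\cap\C^{\overline{013}})\oplus\C^{\overline{2}}$. As a cross-check against the norm-function definitions, differentiating $\psi$ and $\chi$ along a curve $T(t)=e+tX+o(t)$ gives $\widetilde{X}+X=2(X_{\overline{0}}+X_{\overline{1}})$ and $\widehat{\widetilde{X}}+X=2(X_{\overline{0}}+X_{\overline{3}})$, so the conditions $\widetilde{X}+X\in\Z^{\overline{k}}$ and $\widehat{\widetilde{X}}+X\in\Z^{\overline{k}}$ pin down exactly $X_{\overline{0}},X_{\overline{1}},X_{\overline{3}}$ once one uses that $\Z^{\overline{k}}$ is a direct sum of quaternion-type subspaces; this recovers the same algebras. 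To read off the entries of Table \ref{table_lie_alg} I would substitute Lemma \ref{centralizers_qt} ($\Z^{\overline{k}}=\Z^{k}$ and $\check{\Z}^{\overline{k}}=\check{\Z}^{k}$ for $k=1,2,3$, together with $\Z^{\overline{0}}=\Z^{4}$ and $\check{\Z}^{\overline{0}}=\langle\Z^{4}\rangle_{(0)}$) and the explicit grade decompositions of $\Z^{k},\check{\Z}^{k},\Z^{4},\langle\Z^{4}\rangle_{(0)}$ recalled in Section \ref{section_qt}, splitting into the cases $n$ odd, $n$ even with $r\neq n$, and $r=n$.

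The main obstacle is the structural step of the second paragraph: proving that the four commutator/anticommutator target types are pairwise distinct and that type $2$ is the unique stabilizing type for every $k$ under both $\ad$ and $\check{\ad}$. This is exactly what makes the (twisted) centralizers type-graded and forces the clean ``type-$0,1,3$ in the centralizer, type-$2$ free'' dichotomy; once it is in hand, the remaining work is the lengthy but routine substitution of the explicit centralizer forms and the bookkeeping of the parity and degeneracy cases.
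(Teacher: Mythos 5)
Your proposal is correct, and its main line is a genuinely different route from the paper's. The paper's own proof is a one-sentence sketch of exactly what you relegate to a ``cross-check'': differentiate the norm-function definitions of $\Q^{\overline{k}}$ and $\check{\Q}^{\overline{k}}$ at the identity to get $X+\widetilde{X}\in\Z^{\overline{k}}$ and $X+\widehat{\widetilde{X}}\in\Z^{\overline{k}}$ (resp.\ $\check{\Z}^{\overline{k}}$), then project onto the quaternion types, using that the (twisted) centralizers are type-graded, to conclude that $X_{\overline{0}},X_{\overline{1}},X_{\overline{3}}$ are pinned down and $X_{\overline{2}}$ is free. Your primary route instead passes through $\Q^{\overline{k}}=\Gamma^{\overline{k}}$ and $\check{\Q}^{\overline{k}}=\check{\Gamma}^{\overline{k}}$ (Theorems \ref{maintheo_q}, \ref{maintheo_checkq}), the standard fact that the Lie algebra of a subspace stabilizer is the stabilizer under the differentiated representation, and the $\mathbb{Z}_2\times\mathbb{Z}_2$-graded commutator/anticommutator structure of $\C$; this yields the same uniform answer $\C^{\overline{2}}\oplus(\Z^{\overline{k}}\cap\C^{\overline{013}})$, resp.\ $\C^{\overline{2}}\oplus(\check{\Z}^{\overline{k}}\cap\C^{\overline{013}})$, after which both arguments finish with the same substitution of the explicit centralizer forms. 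What your route buys is a clean two-sided argument: sufficiency is automatic for stabilizers, whereas differentiating the quadratic conditions $\psi(T),\chi(T)$ only gives the necessary direction directly (note $\psi(\exp(tX))=\exp(t\widetilde{X})\exp(tX)$ need not equal $\exp(t(X+\widetilde{X}))$, a gap the paper's terse proof does not address; it is closed precisely by the group equalities you invoke). What the paper's route buys is brevity and independence from Theorems \ref{maintheo_q} and \ref{maintheo_checkq}. Finally, the ``main obstacle'' you flag is not a real one: the pairwise distinctness of the target types follows in two lines from $\widehat{UV\pm VU}=\widehat{U}\widehat{V}\pm\widehat{V}\widehat{U}$ and $\widetilde{UV\pm VU}=\pm(\widetilde{U}\widetilde{V}\pm\widetilde{V}\widetilde{U})$ (commutators shift the reversion bit, parity adds), which is exactly the $\mathbb{Z}_2\times\mathbb{Z}_2$-grading with respect to the commutator and anticommutator already cited in the paper \cite{b_lect}; your claimed dichotomy (type-$2$ free, types $0,1,3$ forced into the centralizer, with the twisted centralizer conditions matching commutator for even types and anticommutator for odd types) checks out against every entry of Table \ref{table_lie_alg}.
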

\begin{proof}
    To prove the statements, we use the well-known facts on the relation between an arbitrary Lie group and the corresponding Lie algebra. 
    For example, when $n$ is even, differentiating the defining equations of $\Q^{\overline{1}}$ (\ref{def_Q1}) gives $X+\widetilde{X}\in\Z^1=\Lambda^{(0)}$ and $X+\widehat{\widetilde{X}}\in\Z^1=\Lambda^{(0)}$ for $X\in\mathfrak{q}^{\overline{1}}$, and by projecting $X$ onto each $\C^{\overline{m}}$, we find that these equations are satisfied precisely when $X\in\C^{\overline{2}}\oplus\Lambda^{\overline{0}}$.
\end{proof}

\afterpage{
\begin{table}[h]
\caption{The Lie groups and the corresponding Lie algebras\protect\footnotemark}\label{table_lie_alg}
\!\!\begin{tabular}{p{2.15cm}p{1.25cm}p{0.6cm}p{6.4cm}}  \hline
Lie group & $n\mod{4}$ & $r$ & Lie algebra \\ \hline
$\Q^{\overline{1}}$ & $0,2$ & & $\C^{\overline{2}}\oplus\Lambda^{\overline{0}}$ \\ 
$\check{\Q}^{\overline{0}}$ & $3$ & & \\
$\check{\Q}^{\overline{0}}$ & $0$ & $n$ & \\\hline
$\Q^{\overline{1}}$ & $1,3$ & & $\C^{\overline{2}n}\oplus\Lambda^{\overline{0}}$ \\
$\check{\Q}^{\overline{0}}$ & $0$ & $\neq n$ & \\ \hline
$\Q^{\overline{2}}\!=\!\check{\Q}^{\overline{1}}$ & $2$ & & $\C^{\overline{2}}\oplus\Lambda^{\overline{0}(1)}$ \\ 
$\check{\Q}^{\overline{1}}$ & $0,1,3$ & & \\
$\Q^{\overline{2}},\!\Q^{\overline{0}}$ & $0,1,3$ & $n$ &  \\ \hline
$\Q^{\overline{2}}$ & $0,1,3$ & $\neq n$ & $\C^{\overline{2}n}\oplus\Lambda^{\overline{0}(1)}$ \\  \hline
$\Q^{\overline{3}},\!\check{\Q}^{\overline{2}}$ & $2$ & & $\C^{\overline{2}}\oplus\Lambda^{\overline{0}}\oplus\Lambda^{n-1}\oplus\{\C^{1}_{p,q}\Lambda^{n-2}\}$\\ 
$\check{\Q}^{\overline{2}}$ & $0$ & $n$ & \\ \hline
$\Q^{\overline{3}}$ & $3$ & & $\C^{\overline{2}n}\oplus\Lambda^{\overline{0}}\oplus\Lambda^{n-2}\oplus\{\C^{1}_{p,q}\Lambda^{n-3}\}$ \\ \hline
$\check{\Q}^{\overline{2}}$ & $1,3$ & & $\C^{\overline{2}}\oplus\Lambda^{\overline{0}}\oplus\Lambda^{n}\oplus\{\C^{1}_{p,q}\Lambda^{n-1}\}$ \\ \hline
$\check{\Q}^{\overline{2}}$ & $0$ & $\neq n$ & $\C^{\overline{2}n}\oplus\Lambda^{\overline{0}}\oplus\Lambda^{n-1}\oplus\{\C^{1}_{p,q}\Lambda^{n-2}\}$ \\ \hline
 $\check{\Q}^{\overline{0}}$ & $2$ & & $\C^{\overline{2}}\oplus\Lambda^{\overline{0}}\oplus\{\C^{1}_{p,q}\Lambda^{n-3}\}\oplus\{\C^{2}_{p,q}\Lambda^{n-4}\}$ \\ \hline
  $\Q^{\overline{0}}$ & $3$ & $\neq n$ & $\C^{\overline{2}n}\oplus\Lambda^{\overline{0}(1)}\oplus\{\C^{1}_{p,q}\Lambda^{n-3}\}\oplus\{\C^{2}_{p,q}\Lambda^{n-4}\}$ \\ \hline
 $\check{\Q}^{\overline{0}}$ & $1$ & & $\C^{\overline{2}}\oplus\Lambda^{\overline{0}}\oplus\C^{1}_{p,q}\Lambda^{n-2}\oplus\{\C^{2}_{p,q}\Lambda^{n-3}\}$ \\ \hline
 $\check{\Q}^{\overline{3}}$ & $2$ & & $\C^{\overline{2}}\oplus\Lambda^{\overline{0}(1)}\oplus\{\C^{1}_{p,q}\Lambda^{n-2}\}\oplus\{\C^{2}_{p,q}\Lambda^{n-3}\}$ \\ \hline
 $\Q^{\overline{0}}$ & $0$ & $\neq n$ & $\C^{\overline{2}n}\oplus\Lambda^{\overline{0}(1)}\oplus\{\C^{1}_{p,q}\Lambda^{n-2}\}\oplus\{\C^{2}_{p,q}\Lambda^{n-3}\}$\\ \hline
 $\check{\Q}^{\overline{3}}$ & $3$ & & $\C^{\overline{2}}\oplus\Lambda^{\overline{0}(1)}\oplus\{\C^{1}_{p,q}\Lambda^{n-1}\}\oplus\{\C^{2}_{p,q}\Lambda^{n-2}\}$ \\ \hline
$\Q^{\overline{3}}$ & $0$ & & $\C^{\overline{2}}\oplus\Lambda^{\overline{0}}\oplus\Lambda^{n-1}\oplus\{\C^{2}_{p,q}\Lambda^{n-2}\}$ \\
& & & $\oplus\{\C^{1}_{p,q}\Lambda^{n-2}\}\oplus\{\C^{1}_{p,q}\Lambda^{n-1}\}$ \\ \hline
$\Q^{\overline{3}}$ & $1$ & & $\C^{\overline{2}n}\oplus\Lambda^{\overline{0}}\oplus\Lambda^{n-2}\oplus\{\C^{2}_{p,q}\Lambda^{n-3}\}$ \\ 
& & & $\oplus\{\C^{1}_{p,q}\Lambda^{n-3}\}\oplus\{\C^{1}_{p,q}\Lambda^{n-2}\}$ \\ \hline
$\Q^{\overline{0}}$ & $2$ & & $\C^{\overline{2}}\oplus\Lambda^{\overline{0}(1)}\oplus\{\C^{1}_{p,q}\Lambda^{n-3}\}\oplus\{\C^{1}_{p,q}\Lambda^{n-2}\}$\\ 
& & & $\oplus\{\C^{2}_{p,q}\Lambda^{n-4}\}\oplus\{\C^{2}_{p,q}\Lambda^{n-3}\}$ \\ \hline
$\Q^{\overline{0}}$ & $1$ & $\neq n$ & $\C^{\overline{2}n}\oplus\Lambda^{\overline{0}(1)}\oplus\{\C^{1}_{p,q}\Lambda^{n-3}\}\oplus\{\C^{1}_{p,q}\Lambda^{n-2}\}$\\ 
& & & $\oplus\{\C^{2}_{p,q}\Lambda^{n-4}\}\oplus\{\C^{2}_{p,q}\Lambda^{n-3}\}$ \\ \hline
$\check{\Q}^{\overline{3}}$ & $0,1$ & & $\C^{\overline{2}}\oplus\Lambda^{\overline{0}(1)}\oplus\{\C^{1}_{p,q}\Lambda^{n-2}\}\oplus\{\C^{1}_{p,q}\Lambda^{n-1}\}$ \\
& & & $\oplus\{\C^{2}_{p,q}\Lambda^{n-3}\}\oplus\{\C^{2}_{p,q}\Lambda^{n-2}\}$ \\  \hline
\end{tabular}
\end{table}
\footnotetext{The first column of Table \ref{table_lie_alg} lists the Lie groups considered in the work. The fourth column provides the corresponding Lie algebra for each Lie group depending on $n\mod{4}$ and $r$ given in the second and third column respectively. The third column is empty for the Lie groups that have the same Lie algebra for any $r$. The sets in the fourth column are considered with respect to the commutator $[U,V]=UV-VU$.}
}

\begin{rem}
    Dimensions of the Lie algebras presented in Table \ref{table_lie_alg} can be calculated, using 
    \begin{eqnarray*}
        \!\!\!\!\!\!&\dim \C^{\overline{2}}=2^{n-2}-2^{\frac{n}{2}-1}\cos(\frac{\pi n}{4}),\quad\dim\C^{n}=1,
        \\
        \!\!\!\!\!\!&\dim \C^{k}_{p,q}=\frac{(p+q)!}{k!(p+q-k)!},\quad \dim\Lambda^{k}=
        \left\lbrace
            \begin{array}{lll}
        \frac{r!}{k!(r-k)!},&& \!\!\!\!\!r\geq k,
        \\
        0,&& \!\!\!\!\!r< k,
        \end{array}
            \right.
            \quad 0\leq k\leq n,
        \\
        \!\!\!\!\!\!&\dim\Lambda^{(1)}=
        \left\lbrace
            \begin{array}{lll}
            2^{r-1},&& \!\!\!\!\!r\neq0,
            \\
            0,&& \!\!\!\!\!r=0,
            \end{array}
            \right.
            \quad \dim\Lambda^{\overline{0}}=
        \left\lbrace
        \begin{array}{lll}
        2^{r-2}+2^{\frac{r}{2}-1}\cos(\frac{\pi r}{4}),&&\!\!\!\!\!r\neq0,
        \\
        1,&&\!\!\!\!\!r=0.
        \end{array}
            \right.
    \end{eqnarray*}
\end{rem}

Let us denote the Lie algebras of the Lie groups $\Q^{\overline{k}}$ and $\check{\Q}^{\overline{k}}$, $k=0,1,2,3$, by $\mathfrak{q}^{\overline{k}}=\mathfrak{q}^{\overline{k}}_{p,q,r}$ and $\check{\mathfrak{q}}^{\overline{k}}=\check{\mathfrak{q}}^{\overline{k}}_{p,q,r}$ respectively.

\begin{rem}
    In the case of the non-degenerate geometric algebra $\C_{p,q}$, we have for $k=0,1,2,3$, $m=0,2$, and $l=1,3$,
    \begin{eqnarray}
    \!\!\!\!\!\!\!\!\!\!&    \mathfrak{q}^{\overline{k}}_{p,q}=\check{\mathfrak{q}}^{\overline{2}}_{p,q}=\check{\mathfrak{q}}^{\overline{3}}_{p,q}=\C_{p,q},\quad \check{\mathfrak{q}}^{\overline{1}}_{p,q}=\check{\mathfrak{q}}^{\overline{0}}_{p,q}=\C^{0},\quad n=1;\label{lie1}
    \\
    \!\!\!\!\!\!\!\!\!\!& \mathfrak{q}^{\overline{3}}_{p,q}=\mathfrak{q}^{\overline{0}}_{p,q}=\check{\mathfrak{q}}^{\overline{2}}_{p,q}=\check{\mathfrak{q}}^{\overline{3}}_{p,q}=\C_{p,q},
    \\
    \!\!\!\!\!\!\!\!\!\!&\mathfrak{q}^{\overline{1}}_{p,q}=\mathfrak{q}^{\overline{2}}_{p,q}=\check{\mathfrak{q}}^{\overline{1}}_{p,q}=\check{\mathfrak{q}}^{\overline{0}}_{p,q}=\C^{(0)}_{p,q},\quad n=2;
    \\
    \!\!\!\!\!\!\!\!\!\!&  \mathfrak{q}^{\overline{3}}_{p,q}=\mathfrak{q}^{\overline{0}}_{p,q}=\C_{p,q},\;\;
    \mathfrak{q}^{\overline{1}}_{p,q}=\mathfrak{q}^{\overline{2}}_{p,q}=\C^{0\overline{2}3}_{p,q},\;\; \check{\mathfrak{q}}^{\overline{k}}_{p,q}=\C^{(0)}_{p,q},\;\; n=3;
    \\
    \!\!\!\!\!\!\!\!\!\!&   \mathfrak{q}^{\overline{m}}_{p,q}=\check{\mathfrak{q}}^{\overline{m}}_{p,q}=\C^{0\overline{2}n}_{p,q},\;\;   \mathfrak{q}^{\overline{l}}_{p,q}=\check{\mathfrak{q}}^{\overline{l}}_{p,q}=\C^{0\overline{2}}_{p,q},\;\; n=0\mod{4},\quad n\geq 4;
    \\
    \!\!\!\!\!\!\!\!\!\!& \mathfrak{q}^{\overline{k}}_{p,q}=\C^{0\overline{2}n}_{p,q},\quad  \check{\mathfrak{q}}^{\overline{k}}_{p,q}=\C^{0\overline{2}}_{p,q},\quad n=1\mod{2},\quad n\geq5;
    \\
    \!\!\!\!\!\!\!\!\!\!&   \mathfrak{q}^{\overline{k}}_{p,q}=\check{\mathfrak{q}}^{\overline{k}}_{p,q}=\C^{0\overline{2}}_{p,q},\quad n=2\mod{4},\quad n\geq 6.
    \end{eqnarray}
    Note that the obtained Lie algebras are considered in \cite{OnInner,GenSpin}.
    
\end{rem}

Let us consider the well-known Heisenberg Lie algebras $\mathfrak{heis}(3,\F)$ and $\mathfrak{heis}(4,\F)$ (see, for example, \cite{hall}):
\begin{eqnarray}
\mathfrak{heis}(3,\F)&=&\left\{
\begin{bmatrix}
0 & x_{12} & x_{13} \\
0 & 0 & x_{23} \\
0 & 0 & 0 \\
\end{bmatrix}\in\Mat(3,\F)\right\},\label{heis3}
\\
\mathfrak{heis}(4,\F)&=&\left\{
\begin{bmatrix}
0 & x_{12} & x_{13} & x_{14} \\
0 & 0 & 0 & x_{24} \\
0 & 0 & 0 & x_{34} \\
0 & 0 & 0 & 0 \\
\end{bmatrix}\in\Mat(4,\F)\right\}.\label{heis4}
\end{eqnarray}

\begin{rem}
    In the case of the Grassmann algebra $\C_{0,0,n}$, we have
    \begin{eqnarray}
    &\mathfrak{q}^{\overline{0}}_{0,0,n}=\mathfrak{q}^{\overline{2}}_{0,0,n}=\check{\mathfrak{q}}^{\overline{1}}_{0,0,n}=\check{\mathfrak{q}}^{\overline{3}}_{0,0,n}=\Lambda_n,\qquad \check{\mathfrak{q}}^{\overline{0}}_{0,0,n}=\Lambda^{(0)}_n,\qquad \forall n;\label{lie_1}
    \\
      &  \mathfrak{q}^{\overline{1}}_{0,0,n}=\check{\mathfrak{q}}^{\overline{2}}_{0,0,n}=\Lambda^{(0)n}_n,\qquad \mathfrak{q}^{\overline{3}}_{0,0,n}=\Lambda^{(0)n}_n\oplus\Lambda^{n-2}_n,\qquad \mbox{$n$ is odd};
      \\
      &\mathfrak{q}^{\overline{1}}_{0,0,n}=\Lambda^{(0)}_n,\qquad \check{\mathfrak{q}}^{\overline{2}}_{0,0,n}=\mathfrak{q}^{\overline{3}}_{0,0,n}=\Lambda^{(0)}_n\oplus\Lambda^{n-1}_n,\qquad \mbox{$n$ is even}.\label{lie_3}
    \end{eqnarray}
    Note that all the Lie algebras (\ref{lie_1})--(\ref{lie_3}) are solvable and nilpotent and the following Lie algebras are abelian:
    \begin{eqnarray*}
\!\!\!\!\!\!&\Lambda^{(0)}_n(\F)\cong  \bigoplus^{2^{n-1}}\mathfrak{gl}(1,\F),\quad \forall n;
\quad \Lambda^{(0)n}_n(\F)\cong  \bigoplus^{2^{n-1}+1}\mathfrak{gl}(1,\F),\quad \mbox{$n$ is odd};
\\
\!\!\!\!\!\!&\Lambda^{(0)n}_n(\F)\oplus\Lambda^{n-2}_n(\F)\cong \bigoplus^{2^{n-1}+\frac{n(n-1)}{2}}\mathfrak{gl}(1,\F),\quad \mbox{$n\neq3$ is odd};
\\
\!\!\!\!\!\!&\Lambda^{(0)}_n(\F)\oplus\Lambda^{n-1}_n(\F)\cong\bigoplus^{2^{n-1}+n}\mathfrak{gl}(1,\F),\;\;\mbox{$n\geq4$ is even}.
\end{eqnarray*}
Also note that
\begin{eqnarray}\label{formula_lie_alg1}
\Lambda_2(\F)\cong \mathfrak{gl}(1,\F)\oplus\mathfrak{heis}(3,\F).
\end{eqnarray}
\end{rem}

\begin{ex}
We have the following isomorphisms:
\begin{eqnarray}
    &\check{\mathfrak{q}}^{\overline{0}}_{1,0,2}(\F)\cong \mathfrak{gl}(1,\F)\oplus\mathfrak{heis}(3,\F),\quad     \check{\mathfrak{q}}^{\overline{0}}_{1,1,1}(\BR)\cong\mathfrak{gl}(1,\BR)\oplus\mathfrak{p}(1,1),\label{ex_1}
    \\
    &\check{\mathfrak{q}}^{\overline{0}}_{3,0,0}(\BR)\cong \mathfrak{gl}(1,\BR)\oplus\mathfrak{su}(2),\quad \check{\mathfrak{q}}^{\overline{0}}_{3,0,0}(\BC)\cong \mathfrak{gl}(1,\BC)\oplus\mathfrak{sl}(2,\BC),\label{ex_1_1}
    \\
    &\check{\mathfrak{q}}^{\overline{1}}_{1,0,2}(\BF)\cong\mathfrak{gl}(1,\F)\oplus\mathfrak{heis}(4,\F),
\end{eqnarray}
and, by adding abelian Lie subalgebras, we get 
\begin{eqnarray}
\!\!\!\!\!\!\!\!\!\!\!\!\!\!\!&{\mathfrak{q}}^{\overline{1}}_{1,0,2}(\F)\cong \bigoplus^2\mathfrak{gl}(1,\F)\oplus\mathfrak{heis}(3,\F),
\quad{\mathfrak{q}}^{\overline{1}}_{1,1,1}(\BR)\cong\bigoplus^2\mathfrak{gl}(1,\BR)\oplus\mathfrak{p}(1,1),
\\
\!\!\!\!\!\!\!\!\!\!\!\!\!\!\!&{\mathfrak{q}}^{\overline{1}}_{3,0,0}(\BR)\cong\bigoplus^2\mathfrak{gl}(1,\BR)\oplus\mathfrak{su}(2),
\quad{\mathfrak{q}}^{\overline{1}}_{3,0,0}(\BC)\cong\bigoplus^2\mathfrak{gl}(1,\BC)\oplus\mathfrak{sl}(2,\BC),
\\
\!\!\!\!\!\!\!\!\!\!\!\!\!\!\!&\check{\mathfrak{q}}^{\overline{3}}_{1,0,2}(\BF)\cong \mathfrak{gl}(1,\F)\oplus\mathfrak{gl}(1,\F)\oplus\mathfrak{heis}(4,\F),\label{ex_f}
\end{eqnarray}
where $\mathfrak{heis}(3,\F)$ (\ref{heis3}) and $\mathfrak{heis}(4,\F)$ (\ref{heis4}) are the Heisenberg Lie algebras, $\mathfrak{p}(1,1)$ is the three-dimensional Poincaré Lie algebra (see, for example, \cite{poi11}), which is defined by the commutation relations of the basis elements $x_1$, $x_2$, and $x_3$:
\begin{eqnarray}\label{poin_comm}
    [x_1,x_3]=x_1,\quad [x_2,x_3]=-x_2,\quad [x_1,x_2]=0,
\end{eqnarray}
and 
\begin{eqnarray}
\mathfrak{sl}(n,\F)&=&\{A\in\Mat(n,\F):\quad \tr A=0\},\label{sl}
\\
\mathfrak{su}(n)&=&\{A\in\Mat(n,\BC):\quad A^{\dagger}=-A,\quad \tr A=0\}.\label{su}
\end{eqnarray}
Note that we can get other isomorphisms of Lie algebras, using Table \ref{table_lie_alg}.
\end{ex}

\section{Conclusions}\label{section_conclusions}

In this work, we consider the families of Lie groups $\Gamma^{\overline{k}}$ (\ref{gamma_ov_k}), $\check{\Gamma}^{\overline{k}}$ (\ref{gamma_ov_chk}), and $\tilde{\Gamma}^{\overline{k}}$ (\ref{gamma_ov_tik}), preserving the subspaces $\C^{\overline{k}}$, $k=0,1,2,3$, determined by the grade involution and reversion under the adjoint representation $\ad$ and the twisted adjoint representations $\check{\ad}$ and $\tilde{\ad}$ respectively in degenerate and non-degenerate geometric algebras $\C$. 
We introduce the families of Lie groups $\Q^{\overline{k}}$ (\ref{def_Q1})--(\ref{def_Q0}) and $\check{\Q}^{\overline{k}}$ (\ref{def_chQ1})--(\ref{def_chQ0}), $k=0,1,2,3$,  defined using the norm functions $\psi$ and $\chi$ (\ref{norm_functions}). In the definitions of these Lie groups, we also use centralizers $\Z^{m}$ (\ref{def_Zm}), twisted centralizers $\check{\Z}^{m}$ (\ref{def_chZm}), and their properties.
In Theorems \ref{maintheo_q} and \ref{maintheo_checkq}, we prove that the groups $\Q^{\overline{k}}$ and $\check{\Q}^{\overline{k}}$ are related to the groups 
$\Gamma^{\overline{k}}$, $\check{\Gamma}^{\overline{k}}$, and $\tilde{\Gamma}^{\overline{k}}$
in the following way\footnote{We also wish to include an important comment from an anonymous reviewer concerning these equalities: 'Now we have the explicit conditions (which may be written as quadratic polynomials in coordinates) for these generalized Clifford and Lipschitz groups. This helps enable their use in applications and could support future investigation into their properties as algebraic groups'.}:
\begin{eqnarray}
    \Gamma^{\overline{k}}=\Q^{\overline{k}},\qquad \check{\Gamma}^{\overline{k}}=\tilde{\Gamma}^{\overline{k}}=\check{\Q}^{\overline{k}},\qquad k=1,3;
    \\
    \Gamma^{\overline{k}}=\tilde{\Gamma}^{\overline{k}}=\Q^{\overline{k}},\qquad \check{\Gamma}^{\overline{k}}=\check{\Q}^{\overline{k}},\qquad k=0,2.
\end{eqnarray}
In Table \ref{table_all_groups}, we present a comprehensive list of the groups under consideration in this work and their defining conditions. Namely, the first column indicates the group, while the second column contains the set to which the norm functions $\psi$ and $\chi$  of the group's elements belong  according to the group's definition.

\begin{table}[h]
\caption{The considered Lie groups}\label{table_all_groups} 
\begin{tabular}{l | l}  \hline
Lie group & $\widetilde{T}T$, $\stackrel{}{\widehat{\widetilde{T}}T}
$ \\ \hline
$\Q^{\overline{1}}=\Gamma^{\overline{1}}$ & $\Z^{1\times}=\ker(\ad)$ \\ 
$\Q^{\overline{2}}=\Gamma^{\overline{2}}=\tilde{\Gamma}^{\overline{2}}$ & $\Z^{2\times}$\\ 
$\Q^{\overline{3}}=\Gamma^{\overline{3}}$ & $\Z^{3\times}$\\ 
$\Q^{\overline{0}}=\Gamma^{\overline{0}}=\tilde{\Gamma}^{\overline{0}}$ & $\Z^{4\times}$\\ 
$\check{\Q}^{\overline{1}}=\check{\Gamma}^{\overline{1}}=\tilde{\Gamma}^{\overline{1}}$ & $\check{\Z}^{1\times}=\ker(\tilde{\ad})$ \\ 
$\check{\Q}^{\overline{2}}=\check{\Gamma}^{\overline{2}}$ & $\check{\Z}^{2\times}$\\ 
$\check{\Q}^{\overline{3}}=\check{\Gamma}^{\overline{3}}=\tilde{\Gamma}^{\overline{3}}$ & $\check{\Z}^{3\times}$\\
$\check{\Q}^{\overline{0}}=\check{\Gamma}^{\overline{0}}$ & $(\Z^4\cap\C^{(0)})^{\times}$  \\ \hline
\end{tabular}
\end{table}

In the special case of small dimensions, the Lie groups considered in this paper are related to the well-known matrix groups. Several examples are provided in Section \ref{section_examples}.
For instance, in the case of the Grassmann algebras $\Lambda_1$ and $\Lambda_2$, all the groups in this paper can be realized as subgroups of the upper triangular matrix groups $\UT(2,\F)$ and $\UT(4,\F)$. 
Also in the case of $\Lambda_2$, the higher-dimensional Heisenberg group $\Heis_4$ is closely related to some of the considered groups.
Namely, the group $\Q^{\overline{0}}_{0,0,2}=\Q^{\overline{2}}_{0,0,2}=\check{\Q}^{\overline{1}}_{0,0,2}=\check{\Q}^{\overline{2}}_{0,0,2}$ is isomorphic to a matrix group, and its unipotent subgroup is a subgroup of $\Heis_4$.

In Theorem \ref{thm_alg} and Table \ref{table_lie_alg}, we study the families of Lie algebras corresponding to the considered families of Lie groups. 
The examples of the connection between the corresponding Lie algebras and the classical Lie algebras are provided in Section \ref{section_liealg}.
For instance, in the case of $\C_{1,0,2}$, $\C_{1,1,1}$, and $\C_{3,0,0}$, some of the considered Lie algebras are isomorphic to the direct sums of the general linear Lie algebras, Heisenberg Lie algebras $\mathfrak{heis}(3,\F)$ and $\mathfrak{heis}(4,\F)$, special unitary Lie algebra $\mathfrak{su}(2)$, special linear Lie algebra $\mathfrak{sl}(2,\BC)$, and Poincaré Lie algebra $\mathfrak{p}(1,1)$ (see (\ref{ex_1})--(\ref{ex_f})).
The relation between the considered Lie groups and Lie algebras and the well-known Lie groups and Lie algebras in the case of arbitrary dimension and signature of $\C$ requires further research.
In the special case of the non-degenerate geometric algebra $\C_{p,q}$, this issue is addressed in \cite{lg1_,lg2_,lg3_}.

The groups $\Gamma^{\overline{k}}$, $\check{\Gamma}^{\overline{k}}$, and $\tilde{\Gamma}^{\overline{k}}$ can be viewed as generalizations of the Clifford groups $\Gamma$ (\ref{def_lg}) and the Lipschitz group $\Gamma^{\pm}$ (\ref{def_lg}). Some of them coincide with the Clifford and Lipschitz groups in the cases $n\leq4$ (see Remarks \ref{rem_cg} and \ref{rem_lg}). The considered groups are important for the theory of spin groups. 
By analogy with the ordinary spin groups, the groups $\Gamma^{\overline{k}}$, $\check{\Gamma}^{\overline{k}}$, and $\tilde{\Gamma}^{\overline{k}}$ can be normalized using the norm functions $\psi$ and $\chi$ (\ref{norm_functions}). 
These normalized subgroups can be viewed as generalized degenerate spin groups and can be important for applications. The study of these groups is the direction for the further research.
Moreover, the generalized Lipschitz groups $\check{\Gamma}^{\overline{k}}$ and $\tilde{\Gamma}^{\overline{k}}$ are interesting for applications in construction of neural networks equivariant with respect to pseudo-orthogonal transformations (rotations, reflections, etc.). 
Specifically, since ordinary Lipschitz groups $\Gamma^{\pm}$ are subgroups of the generalized Lipschitz groups, equivariance of a mapping with respect to these groups implies its pseudo-orthogonal groups equivariance. 
Therefore, parametrization of any mappings equivariant with respect to the generalized Lipschitz groups can be employed
in construction of new equivariant neural networks architectures \cite{glgenn0,glgenn}.
So, the families of Lie groups studied in this paper can be useful for applications of geometric algebras in physics \cite{br1,br2,phys,hestenes,ce}, quantum mechanics \cite{br2}, computer science, in particular, for neural networks and machine learning \cite{b1,tf,cNN0,cNN,glgenn0,glgenn,zhdanov}, image processing \cite{bayro_new1,hild_new,ce}, and in other areas.

\section*{Acknowledgements}

The results of this paper were reported at The 9th conference on Applied Geometric Algebras in Computer Science and Engineering, Amsterdam, Netherlands, August 2024. The authors are grateful to the organizers and the participants of this conference for fruitful discussions. The authors are grateful to the reviewers for their careful reading of the paper and helpful comments on how to improve the presentation.

This work is supported by the Russian Science Foundation (project 23-71-10028), https://rscf.ru/en/project/23-71-10028/.

\medskip

\noindent{\bf Data availability} Data sharing not applicable to this article as no datasets were generated or analyzed during the current study.

\medskip

\noindent{\bf Declarations}\\
\noindent{\bf Conflict of interest} The authors declare that they have no conflict of interest.

\appendix

\section{Summary of notation}\label{section_notation}

For the readers' convenience, we summarize all the notation used throughout the paper in Tables \ref{table_notation1}--\ref{table_notation2}. The tables list each notation, its meaning, and the place where it appears for the first time.

\begin{table}[h]
\caption{Summary of notation (part 1)}\label{table_notation1} 
\begin{tabular}{| c | c | c |}  \hline
Notation  & Meaning & First mention \\ \hline
$\C=\C_{p,q,r}$ & \parbox{5.2cm}{\begin{center}(Clifford) geometric algebra over the real $\BR^{p,q,r}$ or complex $\BC^{p+q,0,r}$ vector space\end{center}} & page \pageref{section_Cpqr}\\ \cline{2-2}
$\F$ & \parbox{5.2cm}{\begin{center}field of real or complex numbers in the cases $\BR^{p,q,r}$ and $\BC^{p+q,0,r}$  respectively\end{center}} & page \pageref{section_Cpqr}  \\ \cline{2-2}
$\Lambda=\Lambda_r$ & Grassmann subalgebra $\C_{0,0,r}$ & page \pageref{section_Cpqr} \\ \cline{2-2}
$e_1,\ldots,e_n$ & generators of $\C_{p,q,r}$ & page \pageref{section_Cpqr} \\ \cline{2-2}
$\C^{0}$ & subspace of grade $0$ & page \pageref{def_Cgeqk}\\ \cline{2-2}
$\C^k=\C^{k}$ & subspace of fixed  grade $k=1,\ldots,n$ & page \pageref{def_Cgeqk} \\ \cline{2-2}
$\C^{\geq k}$, $\C^{\leq k}$ & \parbox{5.2cm}{\begin{center}direct sums of subspaces of grades larger/smaller or equal to $k$\end{center}} & \parbox{2.2cm}{\begin{center}formula \eqref{def_Cgeqk}\end{center}} \\ \cline{2-2}
$\{\C^k_{p,q}\Lambd^l\}$ & \parbox{5.2cm}{\begin{center}subspace of $\C_{p,q,r}$ spanned by the elements of the form $ab$, where $a\in \C^k_{p,q}$ and $b\in\Lambd^l_r$\end{center}} & page \pageref{def_Cgeqk}\\ \hline
$\stackrel{}{\widehat{U}}$ & grade involute of $U\in\C$ & page \pageref{def_even_odd} \\ \cline{2-2}
$\widetilde{U}$ & reverse of $U\in\C$ & page \pageref{def_even_odd} \\ \cline{2-2}
$\widehat{\widetilde{U}}$ & Clifford conjugate of $U\in\C$ & page \pageref{def_even_odd} \\ \hline
$\C^{(0)}$, $\C^{(1)}$ & even and odd subspaces & formula \eqref{def_even_odd} \\ \cline{2-2}
$\langle\rangle_{(0)}$, $\langle\rangle_{(1)}$ & \parbox{5.2cm}{\begin{center}projections onto $\C^{(0)}$, $\C^{(1)}$\end{center}} & \parbox{2.2cm}{\begin{center}formulas \eqref{proj1}--\eqref{H_even}\end{center}} \\ \hline
$\C^{\overline{k}}$ & \parbox{5.2cm}{\begin{center}subspaces determined by grade involution and reversion\end{center}} & formula \eqref{qtdef}\\ \cline{2-2}
$\C^{\overline{kl}}$ & \parbox{5.2cm}{\begin{center}direct sums of $\C^{\overline{k}}$, $k=0,1,2,3$\end{center}} & page \pageref{qtdef} \\ \hline
$\H^\times$ & \parbox{5.2cm}{\begin{center}subset of all invertible elements of a set $\H$\end{center}}  & page \pageref{def_cg} \\ \hline
$\Gamma$ & Clifford group & formula \eqref{def_cg}\\ \cline{2-2}
$\Gamma^{\pm}$ & Lipschitz group & formula \eqref{def_lg}\\ \hline
$\psi$, $\chi$ & norm functions of $\C$ elements & formula \eqref{norm_functions} \\ \hline
$\ad$ & \parbox{5.2cm}{\begin{center}adjoint representation ${\ad}_{T}(U)=TU T^{-1}$\end{center}} &  formula (\ref{ar})\\ \cline{2-2}
$\check{\ad}$ & \parbox{5.2cm}{\begin{center}twisted adjoint representation $\check{\ad}_{T}(U)=\widehat{T}U T^{-1}$\end{center}} & \parbox{2.2cm}{\begin{center}formula (\ref{twa1})  \end{center}} \\ \cline{2-2}
$\tilde{\ad}$ & \parbox{5.2cm}{\begin{center}twisted adjoint representation $\tilde{\ad}_{T}(U)=TU_0 T^{-1}+\widehat{T} U_1 T^{-1}$\end{center}} & \parbox{2.2cm}{\begin{center}formula (\ref{twa22})\end{center}} \\ \cline{2-2}
\parbox{2.2cm}{\begin{center}$\ker(\ad)$, $\ker{(\check{\ad})}$, $\ker{(\tilde{\ad})}$\end{center}} & kernels of $\ad$, $\check{\ad}$, and $\tilde{\ad}$ & page \pageref{ker_ad}\\ \hline
$\Z$ & center of $\C$ & formula \eqref{Zpqr} \\  \hline
\end{tabular}
\end{table}

\begin{table}[h]
\caption{Summary of notation (part 2)}\label{table_notation2} 
\begin{tabular}{| c | c | c |}  \hline
Notation  & Meaning & First mention \\ \hline
$\Z^m$, $\check{\Z}^m$ & \parbox{5.2cm}{\begin{center}centralizers and twisted centralizers respectively of $\C^{m}$, $m=0,\ldots,n$\end{center}} & \parbox{2.2cm}{\begin{center}formulas \eqref{def_Zm}--\eqref{def_chZm}\end{center}} \\ \cline{2-2}
$\Z^{\overline{k}}$, $\check{\Z}^{\overline{k}} $& \parbox{5.2cm}{\begin{center}centralizers and twisted centralizers respectively of $\C^{\overline{k}}$, $k=0,1,2,3$\end{center}} & \parbox{2.2cm}{\begin{center}formulas \eqref{def_CC_ov}--\eqref{def_chCC_ov}\end{center}}\\ 
\hline
\parbox{3.2cm}{\begin{center}$\Gamm^{(l)}$,
$\check{\Gamm}^{(l)}$, $\tilde{\Gamm}^{(l)}$\end{center}} & \parbox{5.2cm}{\begin{center}groups preserving $\C^{(l)}$, $l=0,1$, under $\ad$, $\check{\ad}$, and $\tilde{\ad}$ \end{center}} & \parbox{2.2cm}{\begin{center}formulas \eqref{p_f1}--\eqref{p_f}\end{center}}\\ \cline{2-2}
\parbox{2.2cm}{\begin{center}$\P$, $\P^{\pm\Lambda}$ \end{center}}  & \parbox{4.2cm}{\begin{center}groups preserving $\C^{(1)}$ under $\ad$, $\check{\ad}$, and $\tilde{\ad}$ \end{center}} & \parbox{2.2cm}{\begin{center}formulas \eqref{prev_gr}--\eqref{prev_gr_2}\end{center}}\\ \cline{2-2}
\parbox{2.2cm}{\begin{center}$\P^{\pm}$, $\P^{\Lambda}$ \end{center}} & \parbox{4.2cm}{\begin{center}groups preserving $\C^{(0)}$ under $\ad$, $\check{\ad}$, and $\tilde{\ad}$ \end{center}} & \parbox{2.2cm}{\begin{center}formulas \eqref{prev_gr}--\eqref{prev_gr_2}\end{center}}\\ \hline
\parbox{3.2cm}{\begin{center}$\Gamm^{\overline{kl}}$,
$\check{\Gamm}^{\overline{kl}}$\end{center}} & \parbox{5.2cm}{\begin{center}groups preserving $\C^{\overline{kl}}$ under $\ad$ and $\check{\ad}$\end{center}} & \parbox{2.2cm}{\begin{center}formulas \eqref{def_Gamma_kl}--\eqref{def_chGamma_kl}\end{center}}\\ \cline{2-2}
\parbox{2.2cm}{\begin{center}$\A^{\overline{01}}$, $\B^{\overline{12}}$, $\A^{\overline{23}}$, $\B^{\overline{03}}$\end{center}}  & \parbox{4.2cm}{\begin{center}groups preserving $\C^{\overline{kl}}$ under $\ad$\end{center}} & \parbox{2.2cm}{\begin{center}formulas \eqref{def_A01}--\eqref{def_B03}\end{center}}\\ \cline{2-2}
\parbox{2.2cm}{\begin{center}$\check{\A}^{\overline{12}}$, $\check{\B}^{\overline{01}}$, $\check{\A}^{\overline{03}}$, $\check{\B}^{\overline{23}}$ \end{center}} & \parbox{4.2cm}{\begin{center}groups preserving $\C^{\overline{kl}}$ under $\check{\ad}$\end{center}} & \parbox{2.2cm}{\begin{center}formulas \eqref{def_chA12}--\eqref{def_chB23}\end{center}}\\ \hline
\parbox{3.2cm}{\begin{center}$\Gamm^{\overline{k}}$,
$\check{\Gamm}^{\overline{k}}$, $\tilde{\Gamma}^{\overline{k}}$ \end{center}} & \parbox{5.2cm}{\begin{center}groups preserving $\C^{\overline{k}}$ under $\ad$, $\check{\ad}$, and $\tilde{\ad}$ \end{center}} & \parbox{2.2cm}{\begin{center}formulas \eqref{gamma_ov_k}--\eqref{gamma_ov_tik}\end{center}}\\ \cline{2-2}
\parbox{2.2cm}{\begin{center}$\Q^{\overline{k}}$\end{center}}  & \parbox{4.2cm}{\begin{center}groups preserving $\C^{\overline{k}}$ under $\ad$ (if $k=0,1,2,3$) and under $\tilde{\ad}$ (if $k=0,2$) \end{center}} & \parbox{2.2cm}{\begin{center}formulas \eqref{def_Q1}--\eqref{def_Q0}\end{center}}\\ \cline{2-2}
\parbox{2.2cm}{\begin{center}$\check{\Q}^{\overline{k}}$ \end{center}} & \parbox{4.2cm}{\begin{center}groups preserving $\C^{\overline{k}}$ under $\check{\ad}$ (if $k=0,1,2,3$) and under $\tilde{\ad}$ (if $k=1,3$) \end{center}} & \parbox{2.2cm}{\begin{center}formulas \eqref{def_chQ1}--\eqref{def_chQ0}\end{center}}\\ \hline
$\Mat(n,\F)$ & algebra of matrices of size $n$ & page \pageref{heis}\\ \cline{2-2}
$\UT(n,\F)$ & \parbox{4.2cm}{\begin{center}group of upper triangular matrices\end{center}} & \parbox{2.2cm}{\begin{center}formulas \eqref{ut_2}--\eqref{ut_4}\end{center}} \\ \cline{2-2}
$\SUT(n,\F)$ & unipotent subgroup of $\UT(n,\F)$ & formula \eqref{sut}\\ \cline{2-2}
$\Heis_n$ & \parbox{4.2cm}{\begin{center}higher-dimensional Heisenberg group\end{center}} & formula \eqref{heis}\\ \hline
$\mathfrak{q}^{\overline{k}}$, $\check{\mathfrak{q}}^{\overline{k}}$ & \parbox{4.2cm}{\begin{center}Lie algebras of the Lie groups $\Q^{\overline{k}}$ and $\check{\Q}^{\overline{k}}$\end{center}} & page \pageref{lie1} \\ \cline{2-2}
$\mathfrak{gl}(1,\F)$ & \parbox{4.2cm}{\begin{center}general linear Lie algebra\end{center}} & page \pageref{formula_lie_alg1} \\ \cline{2-2}
$\mathfrak{heis}(3,\F)$, $\mathfrak{heis}(4,\F)$ & Heisenberg Lie algebras & \parbox{2.2cm}{\begin{center}formulas \eqref{heis3}--\eqref{heis4}\end{center}} \\ \cline{2-2}
$\mathfrak{p}(1,1)$ & Poincaré Lie algebra & formula \eqref{ex_1} \\ \cline{2-2}
$\mathfrak{su}(2)$ & special unitary Lie algebra & formula \eqref{ex_1_1}\\ \cline{2-2}
$\mathfrak{sl}(2,\BC)$ & special linear Lie algebra & formula \eqref{ex_1_1} \\ \hline
\end{tabular}
\end{table}

\bibliographystyle{spmpsci}

\begin{thebibliography}{100}

\bibitem{Abl} Ablamowicz, R.: Structure of spin groups associated with degenerate Clifford algebras. J. Math. Phys. 27(1), 1–6 (1986)


\bibitem{ABS} Atiyah, M., Bott, R., Shapiro, A.: Clifford Modules. Topology \textbf{3}, 3--38 (1964)

\bibitem{baker}
Baker, A.: Matrix Groups: An Introduction to Lie Group Theory.  Springer, New York (2002)

\bibitem{bayro_new1}
Bayro-Corrochano, E., Scheuermann, G.: Geometric Algebra Computing in Engineering and Computer Science. Springer-Verlag, London (2010)


\bibitem{lg1} 
Benn, I., Tucker, R.:
An introduction to Spinors and Geometry with Applications in Physics.
Bristol (1987)

\bibitem{b1}
Brandstetter, J., Berg, R.V., Welling, M., Gupta, J.K.: Clifford Neural Layers for PDE Modeling (2022), arXiv:2209.04934

\bibitem{tf}
Brehmer, J., De Haan, P., Behrends, S., Cohen, T.: Geometric Algebra Transformer. Advances in Neural Information Processing Systems 37, (2023), arXiv:2305.18415

\bibitem{br1}
Brooke, J.: Clifford Algebras, Spin Groups and Galilei Invariance—New Perspectives. Thesis, University of Alberta (1980)

\bibitem{br2}
Brooke, J.: A Galileian formulation of spin: I. Clifford algebras and spin groups. J. Math. Phys. 19, 952–959 (1978)


\bibitem{CJW2} Catto, S., Choun, Y., Gurcan, Y., Khalfan, A., Kurt, L.: Grassmann Numbers and Clifford-Jordan-Wigner Representation of Supersymmetry. J. of Physics: Conference Series, \textbf{411} (2013)

\bibitem{cr}
Crumeyrolle, A.: Orthogonal and Symplectic Clifford Algebras. 1st edn. Springer, Netherlands (1990)

\bibitem{tu}
Dereli, T., Kocak, S., Limoncu, M.: Degenerate Spin Groups as Semi-Direct Products. Advances in Applied Clifford Algebras \textbf{20}, 565–573 (2010)

\bibitem{phys} Doran, C., Lasenby, A.: Geometric Algebra for Physicists. Cambridge University Press, Cambridge, UK (2003)

\bibitem{pga1}
Dorst, L., De Keninck, S.: A Guided Tour to the Plane-Based Geometric Algebra PGA. Version 2.0 (2022). Available at
\texttt{http://bivector.net/PGA4CS.html}

\bibitem{ce}
Dorst, L., Doran, C., Lasenby, J.: Applications of Geometric Algebra in Computer Science and Engineering. Birkhauser Boston, MA (2002)

\bibitem{dorst_new}
Dorst, L., Fontijne, D., Mann, S.: Geometric Algebra for Computer Science: An Object-Oriented Approach to Geometry. Morgan Kaufmann, 1st edition (2008)

\bibitem{GenSpin} Filimoshina, E., Shirokov, D.: On generalization of Lipschitz groups and spin groups. Mathematical Methods in the Applied Sciences,  \textbf{47}(3), 1375--1400 (2024)

\bibitem{ICACGA} Filimoshina, E., Shirokov, D.: On some Lie groups in degenerate geometric algebras. In: Eckhard
Hitzer \& Dietmar Hildenbrand (eds), First International Conference, ICACGA 2022, Colorado Springs, CO, USA, 2022, Proceedings. Lecture Notes in Computer Science. Springer, Cham, 13771, 186--198, 2024 

\bibitem{OnSomeLie}
Filimoshina, E., Shirokov, D.: On Some Lie Groups in Degenerate Clifford Geometric Algebras. Advances in Applied Clifford Algebras, \textbf{33}(44), 29 pp. (2023), arXiv: 2301.06842


\bibitem{centr}
Filimoshina, E., Shirokov, D.: A Note on Centralizers and Twisted Centralizers in Clifford Algebras. Advances in Applied Clifford Algebras, 
 \textbf{34}(50) (2024), arXiv:2404.15169

\bibitem{AB_cgi}
Filimoshina, E., Shirokov, D.: Generalized Degenerate Clifford and Lipschitz Groups. Advances in Computer Graphics. CGI 2024. Lecture Notes in Computer Science. Springer, Cham, 15340, 364--376, 2025


\bibitem{AB_aaca}
Filimoshina, E., Shirokov, D.: Generalized Degenerate Clifford and Lipschitz Groups in Geometric Algebras.  Advances in Applied Clifford Algebras, \textbf{35}, 29 (2025)

\bibitem{glgenn0}
Filimoshina, E., Shirokov, D.: Equivariant Neural Networks with Geometric Algebras: A New Approach. IEEE Proceedings (to appear), International Joint Conference on Neural Networks, 2025

\bibitem{glgenn}
Filimoshina, E., Shirokov, D.: GLGENN: A Novel Parameter-Light Equivariant Neural Networks Architecture Based on Clifford Geometric Algebras. Proceedings of the 42nd International Conference on Machine Learning (Vancouver, Canada, 2025), Proceedings of Machine Learning Research, 267, 2025, 17153--17188.

\bibitem{heis2}
Gosson, M.: Symplectic Geometry and Quantum Mechanics. Springer, Basel (2006)

\bibitem{gunn1}
Gunn, C.: Geometric Algebras for Euclidean Geometry. Adv. Appl. Clifford Algebras \textbf{27}, (2017)


\bibitem{hall}
 Hall, B.: Lie Groups, Lie Algebras, and Representations. Springer, Cham (2015)


\bibitem{HelmBook} Helmstetter, J., Micali, A.: Quadratic Mappings and Clifford Algebras. Birkhäuser, Basel (2008)


\bibitem{hestenes} Hestenes, D., Sobczyk, G.: Clifford Algebra to Geometric Calculus - A Unified
Language for Mathematical Physics. Reidel Publishing Company, Dordrecht Holland (1984)


\bibitem{hild_new}
Hildenbrand, D.: Introduction to Geometric Algebra Computing (1st ed.). Chapman and Hall/CRC (2018)


\bibitem{alg}
Isaacs, I. M.: Algebra: a graduate course.  American Mathematical Society, 516 pp. (2009)

\bibitem{lounesto} Lounesto, P.: Clifford Algebras and Spinors. Cambridge University Press, Cambridge, UK (1997)


\bibitem{poi11}
Nesterenko, M.: The Poincar\'e algebras p(1, 1) and p(1, 2): realizations and deformations. Journal of Physics: Conference Series, \textbf{621} (2015)

\bibitem{ro2_new}
Perwass, C.: Geometric Algebra with Applications in Engineering. Springer Berlin, Heidelberg (2009)

\bibitem{p} Porteous, I.: Clifford Algebras and the Classical Groups. Cambridge University Press, Cambridge, UK (1995)

\bibitem{ma}
Roelfs, M., De Keninck, S.: Graded Symmetry Groups: Plane and Simple. Adv. Appl. Clifford Algebras \textbf{33}(30) (2023) 

\bibitem{cNN}
Ruhe, D., Brandstetter, J., Forré, P.: Clifford Group Equivariant Neural Networks (2023), arXiv:2305.11141

\bibitem{cNN0}
Ruhe, D., Gupta, J. K., De Keninck, S., Welling, M., Brandstetter, J.: Geometric Clifford Algebra Networks. Proceedings of the 40th International Conference on Machine Learning, 29306--29337 (2023), arXiv:2302.06594



\bibitem{OnInner} Shirokov, D.: On inner automorphisms preserving fixed subspaces of Clifford algebras. Adv. Appl. Clifford Algebras \textbf{31}(30), (2021) 

\bibitem{b_lect}
Shirokov, D.: Clifford algebras and their applications to Lie groups and spinors. In: Proceedings of the 19 International Conference on
Geometry, Integrability and Quantization, Avangard Prima, Sofia, 11-53 (2018)

\bibitem{quat1} 
Shirokov, D.: 
Classification of elements of Clifford algebras according to quaternionic types. Dokl. Math., \textbf{80}(1), 610–612 (2009)

\bibitem{quat2} 
Shirokov, D.:
Quaternion typification of Clifford algebra elements. Adv. Appl. Clifford Algebras, \textbf{22}(1), 243–256 (2012)

\bibitem{quat3} 
Shirokov D. 
Development of the method of quaternion typification of Clifford algebra elements. Adv. Appl. Clifford Algebras, \textbf{22}(2),  483-497 (2012)

\bibitem{lg1_}
Shirokov, D.: Symplectic, Orthogonal and Linear Lie Groups in Clifford Algebra. Adv. Appl. Clifford Algebras \textbf{25}(3), 707–718 (2015)

\bibitem{lg2_}
Shirokov, D.: On Some Lie Groups Containing Spin Group in Clifford Algebra. Journal of Geometry and Symmetry in Physics \textbf{42}, 73–94 (2016)


\bibitem{lg3_}
Shirokov, D.: Classification of Lie algebras of specific type in complexified
Clifford algebras. Linear and Multilinear Algebra \textbf{66}(9), 1870–1887 (2018)


\bibitem{sommer_new}
Sommer, G.: Geometric Computing with Clifford Algebras: Theoretical Foundations and Applications in Computer Vision and Robotics. Springer Berlin, Heidelberg (2001)

\bibitem{zhdanov}
Zhdanov, M., Ruhe, D., Weiler, M., Lucic, A., Brandstetter, J., Forré, P.:
Clifford-steerable convolutional neural networks. In: Proceedings of the 41st International Conference on Machine Learning, \textbf{235}, 61203–61228 (2024)


\end{thebibliography}

\end{document}